\author{Thomas Brooks}
\title{Non-negative Curvature and Conullity of the Curvature Tensor}
\newcommand{\R}[0]{\mathbb{R}}
\newcommand{\N}[0]{\mathbb{N}}
\newcommand{\brak}[1]{\left [ #1 \right ]}
\newcommand{\paren}[1]{\left ( #1 \right )}
\newcommand{\curly}[1]{\left \{ #1 \right \}}
\newcommand{\chevron}[1]{\left \langle #1 \right \rangle}
\newcommand{\pder}[2]{\tfrac{\partial#1}{\partial#2}}
\DeclareMathOperator{\Sec}{sec}
\DeclareMathOperator{\Scal}{Scal}
\DeclareMathOperator{\trace}{tr}
\newtheorem{theorem}[equation]{Theorem}
\newtheorem*{theorem*}{Theorem}
\newtheorem{prop}[equation]{Proposition}
\newtheorem*{prop*}{Proposition}
\newtheorem{lemma}[equation]{Lemma}
\newtheorem*{lemma*}{Lemma}
\newtheorem*{corollary*}{Corollary}
\newtheorem*{question*}{Question}
\newtheorem*{problem*}{Problem}
\newtheorem*{conjecture*}{Conjecture}
\theoremstyle{remark}
\theoremstyle{definition}
\begin{document}

\begin{abstract}
The conullity of a curvature tensor is the codimension of its kernel.
We consider the cases of conullity two in any dimension and conullity three in dimension four.
We show that these conditions are compatible with non-negative sectional curvature only if either the manifold is diffeomorphic to $\R^n$ or the universal cover is an isometric product with a Euclidean factor.
Moreover, we show that finite volume manifolds with conullity 3 are locally products.
\end{abstract}

\maketitle

Let $(M^n,g)$ be a Riemannian manifold with curvature tensor $R$.
Define the distribution
\[ \ker R_p := \curly{ X \in T_p M: R(X,Y)Z = 0 \mbox{ for all } Y,Z \in T_p M}. \]
We say that $M^n$ has \emph{nullity k} if at every point $p \in M$, $\ker R_p$ has dimension $k$.
We will study manifolds with conullity 2 or 3.
The simplest example is $M = \Sigma^2 \times \R^{n-2}$ with the product metric and $\Sigma^2$ any surface.
This manifold has conullity 2 if $\Sigma^2$ has nowhere zero Gaussian curvature.
There are many other examples with conullity two which are locally irreducible, see \cite{CN2_book} and refrences therein.

Our two main results concern such manifolds under the assumption of non-negative sectional curvature.
\begin{theorem}
\label{thm:pos_scal_CN2}
Suppose that $M^n$, $n \geq 2$, is complete, has conullity 2 and $\sec \geq 0$.
If its universal cover is irreducible, then $M^n$ is diffeomorphic to $\R^n$.
\end{theorem}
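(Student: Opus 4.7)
The plan is to combine the classical theory of the nullity distribution with Cheeger--Gromoll soul theory, and use the irreducibility of the universal cover as the lever that forces the soul to collapse to a point. The starting point is standard: since $M$ is complete with $\sec \geq 0$ and has constant nullity $n-2$, the nullity distribution $\mathcal{N} := \ker R$ is smooth, integrable, and its leaves are complete, totally geodesic, and flat, each intrinsically isometric to $\R^{n-2}$ (see \cite{CN2_book} and the references therein). All of this lifts to the universal cover $\tilde M$.

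Next, apply Cheeger--Gromoll to $\tilde M$ to obtain a simply connected, compact, totally geodesic soul $\tilde S \subset \tilde M$, with $\tilde M$ diffeomorphic to the normal bundle of $\tilde S$. I would then establish the transversality $T_p \tilde S \cap \ker R_p = \{0\}$ at every $p \in \tilde S$: otherwise the geodesic $\gamma_v$ in such a direction $v \in T_p \tilde S \cap \ker R_p$ lies simultaneously in compact $\tilde S$ (by total convexity) and in a flat leaf $L(p) \cong \R^{n-2}$ (by total geodesicity of the nullity), giving an injective bounded geodesic whose asymptotic Busemann function in $\sec \geq 0$ yields a line of $\tilde M$ in a nullity direction; Cheeger--Gromoll splitting then contradicts irreducibility. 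Consequently $\dim \tilde S \leq 2$, and simple connectedness forces $\tilde S$ to be either a point or diffeomorphic to $S^2$.

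The technical core is ruling out $\tilde S \cong S^2$. In this case $\mathcal{N}|_{\tilde S}$ is complementary to $T \tilde S$ by dimension count. Using Perelman's theorem that the Sharafutdinov retraction $\pi \colon \tilde M \to \tilde S$ is a smooth Riemannian submersion, I would identify $\mathcal{N}$ with the $\pi$-vertical distribution, verify that both O'Neill tensors vanish (the $T$-tensor because the vertical leaves are totally geodesic and flat, and $A$ because any nontrivial horizontal curvature would clash with the conullity-2 rigidity along the totally geodesic $\tilde S$), and conclude that $\tilde M$ splits isometrically as $\tilde S \times \R^{n-2}$, contradicting irreducibility. Hence $\tilde S$ is a point, $\tilde M \cong \R^n$; by Wilking's identification of the soul of $\tilde M$ with the universal cover of a soul of $M$, the soul of $M$ is also a point, so $M \cong \R^n$ by the soul theorem.

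The expected main obstacle is the splitting argument in the $\tilde S \cong S^2$ case: one must upgrade the purely tangential decomposition $\mathcal{N} \oplus T \tilde S = T \tilde M|_{\tilde S}$ into a global isometric product, which requires delicately matching Perelman's submersion structure with the rigidity coming from conullity $2$. A secondary subtlety is the line extraction in the transversality step, since nullity geodesics in a leaf need not themselves be lines in $\tilde M$; a soft Busemann/asymptotic argument is needed to produce a bona fide line.
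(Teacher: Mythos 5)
Your high-level strategy, reducing to the interaction of the soul with $\ker R$, is in the right spirit and parallels the paper, but two of your pivotal steps are genuine gaps rather than subtleties. First, the transversality claim $T_p\tilde S\cap\ker R_p=\{0\}$ is not established by your argument. A geodesic lying in the compact soul $\tilde S$ and in a nullity leaf is bounded, but bounded geodesics are precisely the ones that are \emph{not} lines; the leaf need not be simply connected (so the geodesic could even be periodic), and no Busemann-function construction you gesture at will turn a bounded geodesic into a line. The paper sidesteps this entirely: Lemma~\ref{lemma:soul_basis} shows that either $T_pS\subset\ker R_p$ (flat soul, handled by Lemma~\ref{lemma:flat_soul}) or the soul itself has conullity 2, in which case Proposition~\ref{prop:finite_volume_splitting} (the finite-volume splitting result, applied to the compact soul) forces $\widetilde S\cong\widetilde D\times\R^{m-2}$. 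You never invoke this proposition, and it is precisely what replaces your transversality step.

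Second, the $S^2$ case is where the technical work lives, and your sketch leaves it essentially unproved. Identifying $\ker R$ with the vertical distribution of the Sharafutdinov retraction is correct \emph{on} the soul but is an assertion that needs proof away from it; and ``any nontrivial horizontal curvature would clash with conullity-2 rigidity'' is not an argument for the vanishing of the O'Neill $A$-tensor. The paper's argument here is concrete and ODE-driven: it shows that the splitting tensor $C_T$ vanishes at every point of $S$ (using the flat strips from the soul theorem to see that $C_T$ has a real eigenvalue, hence $C_T=0$), and then propagates $C_T\equiv 0$ along normal geodesics via the Ricatti equation \eqref{eqn:super_ricatti}, before invoking the de Rham-type splitting of Proposition~\ref{prop:de_rham_splitting}. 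Your O'Neill-tensor framing is a reasonable \emph{target}, but filling it in would require essentially redoing that splitting-tensor computation. As a minor point, working with the soul of $\tilde M$ and then descending via ``Wilking's identification'' is unnecessary and unreferenced; the paper avoids it by taking a soul of $M$ directly and concluding that if $\widetilde M$ is irreducible then that soul must be a point.
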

\begin{theorem}
\label{thm:pos_curv_CN3}
Suppose that $M$ is a complete 4-dimensional Riemannian manifold that has nullity one and $\Sec \geq 0$.
If its universal cover is irreducible, then $M$ is diffeomorphic to $\R^4$.
\end{theorem}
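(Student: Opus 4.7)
The plan is to pass to the universal cover $\widetilde{M}$, which is complete, simply connected, $4$-dimensional with $\Sec\geq 0$, nullity one, and irreducible. I will first show $\widetilde{M}$ is diffeomorphic to $\R^4$, then argue that $\pi_1(M)$ must be trivial.

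I would begin by recording the basic geometry of the nullity distribution $\mathcal{N}:=\ker R$. Since the nullity equals one everywhere, $\mathcal{N}$ is a smooth line field; its integral curves are totally geodesic geodesics, and $\Sec\geq 0$ together with completeness of $\widetilde{M}$ ensures that these nullity geodesics are defined for all time. Along any such geodesic $\gamma$ the Jacobi equation degenerates to $J''=0$, so $\gamma$ has no conjugate points. Next I would introduce the splitting (conullity) tensor $C_T\colon \mathcal{N}^\perp\to\mathcal{N}^\perp$, $C_T Y=-(\nabla_Y T)_{\mathcal{N}^\perp}$, for a local unit section $T$ of $\mathcal{N}$; it satisfies the Riccati equation $C'=C^2$ along nullity geodesics, and completeness together with $\Sec\geq 0$ forces $C_T$ to be nilpotent at every point. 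Since $\mathcal{N}^\perp$ is three-dimensional, this yields a canonical filtration $0\subseteq \ker C_T\subseteq \ker C_T^2\subseteq \mathcal{N}^\perp$, which I would analyze using the Gauss and Codazzi equations.

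The next stage is a dichotomy. If $C\equiv 0$ then $\mathcal{N}$ is parallel, de~Rham splits $\widetilde{M}$ as a metric product with an $\R$-factor, and irreducibility is violated. If $C$ is not identically zero, I would extract from the filtration a proper nontrivial parallel subdistribution, either reducing $\widetilde{M}$ directly (contradicting irreducibility) or exhibiting on an appropriate quotient or leaf the hypotheses of Theorem~\ref{thm:pos_scal_CN2}, again yielding a splitting of $\widetilde{M}$ incompatible with irreducibility. The conclusion is that no positive-dimensional soul of $\widetilde{M}$ can accommodate the nullity line field, so by the Cheeger-Gromoll soul theorem $\widetilde{M}\cong \R^4$. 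For the descent to $M$, note that any nontrivial finite subgroup of $\pi_1(M)$ is excluded by Smith theory (no nontrivial finite group acts freely on $\R^n$), while an infinite $\pi_1(M)$ would produce, by a standard Toponogov/Busemann construction, a line in $\widetilde{M}$; the Cheeger-Gromoll splitting theorem would then once more contradict irreducibility. Therefore $\pi_1(M)=1$ and $M\cong \R^4$.

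The main obstacle is the splitting-tensor analysis in three dimensions. In the conullity $2$ setting of Theorem~\ref{thm:pos_scal_CN2}, $C_T$ lives on a two-dimensional space and the Riccati equation leaves essentially no flexibility; here $C_T$ is a nilpotent endomorphism of a three-dimensional space with several possible Jordan types, and one must combine the algebraic Bianchi identity, the Codazzi equations for $\mathcal{N}^\perp$, and the nonnegative curvature bound to trap the tensor inside a parallel invariant subspace. Promoting the resulting pointwise algebraic structure to a genuine parallel distribution on $\widetilde{M}$, and hence to a de~Rham splitting, is where the real work lies.
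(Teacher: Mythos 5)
Your proposal diverges substantially from the paper's argument, and two of your central claims are not justified.

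First, you assert that ``completeness together with $\Sec\geq 0$ forces $C_T$ to be nilpotent at every point.'' This is not true as stated. The Riccati equation $C' = C^2$ along nullity geodesics only forces all \emph{real} eigenvalues of $C_T$ to vanish; a $3\times 3$ splitting tensor can perfectly well have a conjugate pair of non-zero complex eigenvalues together with a zero eigenvalue, and completeness does not rule this out (the solution $C(t)=C_0(I-tC_0)^{-1}$ exists for all $t$ in that case). The paper explicitly separates these two possibilities, and the nilpotent case is only one of them. Second, the pivot of your plan --- ``extract from the filtration a proper nontrivial parallel subdistribution'' to contradict irreducibility --- is a hope, not an argument. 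Even in the finite-volume setting, where Poincar\'e recurrence is available, the paper's Theorem~\ref{thm:local_fin_volume_CN3} needs a long sequence of Bianchi/Codazzi computations in an adapted frame to kill the splitting tensor; no such mechanism is available here, since $M$ is generally non-compact and not of finite volume.

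The paper's actual route is quite different and you should compare against it. It takes a soul $S$ of $M$ (not of $\widetilde M$), proves a projection lemma (Lemma~\ref{lemma:CN3_soul_projection}) showing that the decomposition $T_pM = T_pS\oplus T_pS^\perp$ is compatible with $\ker R$, and then runs a case analysis on how many of the conullity directions $e_1,e_2,e_3$ and the nullity direction $T$ lie in $T_pS$ versus $T_pS^\perp$. The two tools that do the real work are the flat-strip argument (coming from the soul construction with $\Sec\geq 0$), which feeds into Lemma~\ref{lemma:flat_soul}, and the finite-volume splitting Theorem~\ref{thm:fin_volume_CN3} applied in the case $S=M$ compact. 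Your proposal invokes none of the flat-strip geometry, which is the irreplaceable ingredient that converts $\Sec\geq 0$ plus nullity into vanishing of the splitting tensor at soul points. Without it, the splitting-tensor analysis alone does not close.
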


Additionally, we prove the following, without any curvature assumption.
\begin{theorem}
\label{thm:fin_volume_CN3}
Assume that $M^4$ is a complete, finite volume Riemannian manifold with positive nullity.
If $M$ has $\dim \ker R = n-3$ everywhere, then the universal cover of $M$ splits  isometrically as $D \times \R$ for some 3-manifold $D$.
\end{theorem}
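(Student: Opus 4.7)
The plan is to follow the standard Riccati-equation framework for nullity distributions, with the finite-volume hypothesis providing the decisive global constraint. Write $\nu := \ker R$, a smooth rank-one distribution on $M^4$ whose integral curves are geodesics; by completeness of $M$ these nullity leaves are complete. After passing to the orientation double cover if necessary, let $T$ be a globally defined unit vector field spanning $\nu$, with complete flow $\phi^T_t : M \to M$. Define the splitting tensor $C_T : T^\perp \to T^\perp$ by $C_T X = -(\nabla_X T)^\perp$; in a parallel frame along a nullity leaf, $C_T$ satisfies the matrix Riccati equation $C_T' = C_T^2$, with explicit solution $C_T(t) = C_T(0)(I - tC_T(0))^{-1}$. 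This solution must be defined for all $t \in \R$, ruling out any nonzero real eigenvalue of $C_T(p)$ at any point $p$; in particular the $3 \times 3$ matrix $C_T$ has spectrum $\{0, 0, 0\}$ or $\{0, a \pm bi\}$ at each point.

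\textbf{Finite volume step.} The derivative of $\phi^T_t$ restricted to $T^\perp$ equals the linear Jacobi field $I - tC_T$, so $\det(d\phi^T_t) = \det(I - tC_T)$. Applying change of variables to the global diffeomorphism $\phi^T_t$ of $M$ yields the polynomial identity
\[
\int_M \det(I - tC_T)\, dV = \operatorname{Vol}(M) \qquad \text{for every } t \in \R.
\]
Matching the $t^2$-coefficient gives $\int_M e_2(C_T)\, dV = 0$, where $e_2$ is the second elementary symmetric polynomial of the eigenvalues of $C_T$. From the spectral constraint, $e_2 = a^2 + b^2 \geq 0$ pointwise, forcing $e_2 \equiv 0$ and hence $C_T$ nilpotent at every point. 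Then $\trace C_T = 0$, so $T$ is divergence-free and $\phi^T_t$ preserves Riemannian volume; Poincar\'e recurrence yields a full-measure set of recurrent nullity orbits. Along such an orbit the Riccati solution simplifies to $C_T(t) = C_T(0) + t\,C_T(0)^2$, and the continuity of $|C_T|$ at the return times $\gamma(t_k) \to \gamma(0)$ forces $C_T(0)^2 = 0$. By continuity of $C_T^2$ on $M$, we obtain $C_T^2 \equiv 0$ everywhere.

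\textbf{Main obstacle and conclusion.} The hardest step is improving $C_T^2 = 0$ to $C_T \equiv 0$, i.e., ruling out a genuinely rank-one nilpotent $C_T$ on any open set. My strategy is to exploit the second Bianchi identity: together with $R(\cdot, T)\cdot = 0$ it yields on $T^\perp$ the relation
\[
(\nabla_T R)(X, Y)Z = R(C_T X, Y)Z + R(X, C_T Y)Z.
\]
A rank-one nilpotent $C_T$ produces a distinguished line $\operatorname{im}(C_T) \subset T^\perp$ that is parallel-invariant along nullity leaves (since $C_T(t) = C_T(0)$ when $C_T^2 = 0$); combining this invariance with the Bianchi identity and its higher covariant derivatives should force $\operatorname{im}(C_T) \subseteq \ker R$, which contradicts $\dim \ker R = 1$. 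Once $C_T \equiv 0$, the equation $\nabla T = 0$ holds (using also $\nabla_T T = 0$), so $T$ is a parallel unit vector field on the oriented cover. The de Rham decomposition theorem applied to the universal cover $\widetilde{M}$ then produces the isometric splitting $\widetilde{M} = D \times \R$, where $D$ is a complete $3$-manifold.
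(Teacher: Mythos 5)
Your first two steps are correct and in fact give a cleaner route than the paper's own argument. You observe that $d\phi^T_t$ in a parallel frame is $I - tC_T$, so change of variables gives the polynomial identity $\int_M \det(I - tC_T)\,dV = \operatorname{Vol}(M)$; averaging this over $\pm t$ isolates $\int_M e_2(C_T)\,dV = 0$, and since the spectral constraint forces $e_2 \geq 0$ pointwise, $C_T$ is nilpotent everywhere. (Be a touch more careful with the ``match $t^2$-coefficients'' step: you need $\det(I+tC_T) + \det(I-tC_T) - 2 = 2t^2 e_2$ to be $L^1$, which follows because each summand is nonnegative and integrable.) Then $\operatorname{tr} C_T = 0$ makes the flow volume-preserving, Poincar\'e recurrence applies, and $C_T(t) = C_T(0) + t\,C_T(0)^2$ blowing up forces $C_T^2 \equiv 0$. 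The paper reaches the same two conclusions via explicit formulas for $\operatorname{tr} A(t)$, $\det A(t)$ and a ``weak recurrence'' argument for the complex-eigenvalue case, and via the linear growth of the $(1,3)$-entry for the nilpotent case; your integral identity is more economical and handles both eigenvalue types at once.

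However, the third step --- ruling out a genuinely rank-one nilpotent $C_T$ --- is left as a sketch, and this is precisely where the paper spends the bulk of its effort. Your proposed mechanism, the identity $(\nabla_T R)(X,Y)Z = R(C_T X, Y)Z + R(X, C_T Y)Z$ combined with recurrence, does give something: with $C_T e_2 = a e_1$, $C_T e_1 = C_T e_3 = 0$, one gets $(\nabla_T R)(e_2,e_3) = a\,R(e_1,e_3)$ growing linearly unless $R(e_1,e_3) = 0$, and recurrence kills it. But this alone does \emph{not} yield $\operatorname{im}(C_T) \subseteq \ker R$: iterating $\nabla_T$ produces $\nabla_T^2 R = 2R(C_T\cdot, C_T\cdot) = 0$ (since $C_T$ is rank one), so the Bianchi identity in the $T$ direction and all its higher $T$-derivatives are then exhausted, and the component $R(e_1,e_2)$ remains unconstrained. (Concretely, $R(e_1,e_3) = 0$ kills only the $e_1\wedge e_3$ direction of the $3\times 3$ curvature operator on $\Lambda^2 T^\perp$; the $e_1\wedge e_2$ block can still be nonzero.) To finish, the paper works in an adapted frame, uses $R(T,\cdot)\cdot = 0$ and $\langle R(\cdot,\cdot)\cdot,T\rangle = 0$ together with repeated Poincar\'e recurrence to force several connection coefficients $\omega^k_{ij}$ to vanish, invokes the second Bianchi identity in the $e_i$-directions (not just along $T$) to obtain $\omega^2_{33}\omega^3_{21} = 0$, and only then shows by direct computation that $R(e_1,\cdot)\cdot = 0$. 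None of this is implied by your ``should force'' sentence, so the gap is genuine and non-trivial --- it is the hardest part of the theorem, as you yourself flag.
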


See also Theorem~\ref{thm:local_fin_volume_CN3} for a local version of this result.

In \cite{olmos}, the authors found a homogenous (and hence complete) example with conullity 3 and $\Scal < 0$.
We give an example with $\sec \geq 0$ but we do not know of any which are complete.

In Section~\ref{section:preliminaries}, we give basic definitions and properties of manifolds of positive nullity.
In Section~\ref{section:conullity2}, we prove Theorem~\ref{thm:pos_scal_CN2}, and in Section~\ref{section:conullity3}, we prove Theorems~\ref{thm:pos_curv_CN3} and~\ref{thm:fin_volume_CN3}.
In Section~\ref{section:example}, we give an example of a locally irreducible conullity 3 metric on $\R^4$.

The results in this paper are part of the author's Ph.D. thesis \cite{thesis} under the direction of Dr. Wolfgang Ziller.
The author is deeply grateful to Dr. Ziller for his invaluable guidance throughout the development and writing of these results.

\section{Preliminaries}
\label{section:preliminaries}

It is well known that $\ker R$ has complete, totally geodesic leaves on the open subset where $\dim \ker R$ is minimal \cite{maltz}.
Moreover, these leaves are flat their tangent space is in $\ker R$.
Any geodesic contained in a leaf of $\ker R$ is called a \emph{nullity geodesic}, and all geodesics starting at $p \in M$ with tangent vector $T \in \ker R$ are nullity geodesics.
Since $\ker R$ has totally geodesic leaves, the orthogonal distribution $\ker R^\perp$ is parallel along the leaves of $\ker R$.

Following the conventions of \cite{graph_manifolds, dajczer_gromoll_C}, define the \emph{splitting tensor} $C_T$ for any $T \in \ker R$ by
\begin{equation}
(C_T)_p(X) = - (\nabla_X T)^{\ker R_p^\perp}
\end{equation}
where $(\cdot)^{\ker R_p^\perp}$ denotes the orthogonal projection onto $(\ker R_p)^\perp$.
Notice that if $C_T \equiv 0$ for all $T$, then the metric splits locally.

Moreover, from \cite{graph_manifolds}, for vector fields $U,S \in \ker R$,
\begin{align*}
C_{\nabla_U S} X
    &= -(\nabla_U \nabla_X S)^{\ker R^\perp} - (\nabla_{[X,U]} S)^{\ker R^\perp} \\
    &= (\nabla_U C_S) X + C_S(\nabla_U X) - C_S([U,X]^{\ker R^\perp}) \\
    &= (\nabla_U C_S) X + C_S(\nabla_X U) \\
    &= (\nabla_U C_S) X - C_S C_U X
\end{align*}
Hence, we obtain a Ricatti-type equation,
\begin{equation}
\label{eqn:super_ricatti}
\nabla_U C_S = C_{\nabla_U S} + C_S C_U.
\end{equation}

Along a nullity geodesic $\gamma(t)$ with tangent vector $T \in \ker R$, we can choose a parallel basis $\curly{e_1, \ldots, e_k}$ of $\ker R^\perp$.
Then $C_T$ written in this basis is a matrix $C(t)$ along $\gamma(t)$ satisfying
\begin{equation}
\label{eqn:ricatti}
C'(t) = C^2
\end{equation}
and hence has solutions $C(t) = C_0(I - tC_0)^{-1}$ for some matrix $C_0 = C(0)$.
Hence all real eigenvalues of $C_T$ must be zero.

When $M$ has conullity at most $2$, then $C_T$ is a $2 \times 2$ matrix and hence either is nilpotent or has two non-zero complex eigenvalues.
When $M$ has conullity at most $3$, then $C_T$ is $3 \times 3$ matrix and hence always has a zero eigenvalues.
Moreover, $C_T$ is again either nilpotent or has two non-zero complex eigenvalues.
These two cases lead to qualitatively different behavior.

We will make use of the following de Rham-type splitting result, see \cite{graph_manifolds}.
\begin{prop}
\label{prop:de_rham_splitting}
Let $M$ be a complete Riemannian manifold, and $V \subset M$ a connected open subset on which the parallel rank $k$ distribution $\ker R$ has leaves that are complete.
Then, the universal cover of $V$ is isometric to $\tilde D \times \R^k$, where $\tilde D$ is the universal cover of a maximal leaf $D$ of $\ker R^\perp$.
Furthermore, the normal exponential map $\exp^\perp: T^\perp D \rightarrow V$ is an isometric covering map if $T^\perp D$ is equipped with the induced connection metric.

\end{prop}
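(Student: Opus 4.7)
The plan is to construct a local isometry $\Phi: \tilde D \times \R^k \to V$ from a simply connected domain and to verify it is a covering map; this identifies $\tilde D \times \R^k$ with the universal cover of $V$ and realizes $\exp^\perp$ as an isometric covering.

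First I set up the geometry. Since $\ker R$ is parallel on $V$, so is $\ker R^\perp$, and both distributions are integrable with totally geodesic leaves. Leaves of $\ker R$ are flat (their tangent vectors lie in $\ker R$) and complete by hypothesis. Fix a maximal leaf $D$ of $\ker R^\perp$; the normal bundle $T^\perp D$ has fiber $\ker R_p$ at $p$. For $X \in TD$ and any section $T$ of $T^\perp D$, parallelism of $\ker R$ yields $\nabla_X T \in \ker R$, so the induced normal connection $\nabla^\perp$ agrees with $\nabla$ on such sections; its curvature equals $R(\cdot,\cdot)T$, which vanishes since $T \in \ker R$. Thus $\nabla^\perp$ is a flat metric connection on $T^\perp D$.

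Next I analyze $\exp^\perp : T^\perp D \to V$. Completeness of $\ker R$-leaves guarantees it is defined on the entire bundle. At $(p,v)$, the connection splits $T_{(p,v)}(T^\perp D) = T_pD \oplus \ker R_p$, and $d\exp^\perp(X,W) = J(1)$, where $J$ is the Jacobi field along $\gamma(t) = \exp_p(tv)$ with $J(0) = X$ and $\tfrac{DJ}{dt}(0) = W$. Because $\gamma' \in \ker R$, the Jacobi equation collapses to $J'' = 0$, giving $J(t) = P_t X + tP_t W$ for parallel transport $P_t$ along $\gamma$; parallelism of $\ker R$ and $\ker R^\perp$ keeps $P_t X \in \ker R^\perp$ and $P_t W \in \ker R$, so the two summands are orthogonal, $J(1)=0$ forces $(X,W)=0$, and $|J(1)|^2 = |X|^2 + |W|^2$. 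This is precisely the connection metric, so $\exp^\perp$ is a local isometry. Pulling back along the universal cover $\pi : \tilde D \to D$ (with pulled-back metric), the bundle $\pi^* T^\perp D$ is flat over a simply connected base, hence trivialized by parallel orthonormal sections; this produces an isometry $\pi^* T^\perp D \cong \tilde D \times \R^k$ carrying the connection metric to the product metric. Composition yields $\Phi : \tilde D \times \R^k \to V$, a local isometry with simply connected source.

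The main obstacle is the covering property. Because $D$ need not be complete as a submanifold of $V$ -- its closure may touch the topological boundary of $V$ inside $M$ -- neither $\tilde D$ nor $\tilde D \times \R^k$ is a priori complete, so the classical theorem that a local isometry from a complete manifold is a covering map cannot be invoked directly. I would instead establish unique path lifting for $\Phi$. The local de Rham theorem lets one express any sufficiently small path in $V$ as a concatenation of segments tangent alternately to $\ker R$ and $\ker R^\perp$. Nullity segments lift to translations in the $\R^k$ factor, by completeness of $\ker R$-leaves. Each $\ker R^\perp$ segment lies on some leaf of $\ker R^\perp$ and can be transported back to $D$ by parallel nullity vectors, yielding a curve in $D$ that lifts uniquely to $\tilde D$ while the $\R^k$ coordinate is held fixed. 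Independence of the lift from the chosen decomposition reduces to triviality of the holonomy of $\nabla^\perp$ on $\tilde D$, which is immediate from flatness and simple connectedness. Once unique path lifting is established, $\Phi$ is a covering map, and the simple connectedness of its domain identifies it with the universal cover of $V$.
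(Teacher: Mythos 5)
The paper does not prove this proposition; it is quoted directly from \cite{graph_manifolds}, so there is no in-text proof to compare against. Evaluating your argument on its own terms: the first two-thirds are clean and correct. You correctly identify that $\ker R$ parallel forces $\ker R^\perp$ parallel (so both are integrable with totally geodesic leaves), that the normal connection on $T^\perp D$ is flat because its curvature is $R(\cdot,\cdot)T$ with $T \in \ker R$, and the Jacobi-field computation $J(t) = P_t X + t P_t W$ along nullity geodesics, together with orthogonality of $P_t X \in \ker R^\perp$ and $P_t W \in \ker R$, gives exactly the connection metric, so $\exp^\perp$ is a local isometry. Trivializing the pulled-back flat bundle over $\tilde D$ is also standard.

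The covering step is where you are too brief, and it is genuinely the hard part. You correctly flag that $D$ (hence $\tilde D \times \R^k$) need not be complete, so the classical ``local isometry from a complete source is a covering'' fails. But your path-lifting sketch has two gaps you do not close. First, decomposing ``sufficiently small'' paths into alternating $\ker R$/$\ker R^\perp$ segments replaces the given path by a homotopic zigzag; using homotopy invariance of the lift to pass back to the original path presupposes the covering property you are trying to establish. The cleaner route is to lift the path directly as the solution of an ODE in $T^\perp D$: writing $\alpha(s) = \exp_{p(s)}(v(s))$ and using your Jacobi-field formula, $\alpha'(s)$ determines $p'(s)$ and $\tfrac{D}{ds}v(s)$ componentwise via inverse parallel transport along the nullity geodesic. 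Second, and more importantly, you never argue that the lift extends to the closure of its domain of definition. Since $D$ may be incomplete, bounded length of $p(\cdot)$ alone does not force convergence in $D$. What saves the argument is the completeness of the nullity leaves: $v(s)$ has bounded covariant derivative, hence converges, and then $p(s) = \exp_{\alpha(s)}(-P\,v(s))$ (where $P$ is the appropriate parallel transport) converges in $M$ because $\alpha(s)$ does and the nullity geodesic in the complete leaf through $\alpha(s^*)$ exists; the limit lies in $D$, and in the leaf topology of $D$, because near the limit the local product structure pins $p(s)$ to a single $\ker R^\perp$-plaque of the maximal leaf $D$. Without this closedness argument, the claim that $\Phi$ is a covering is not justified. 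Your reduction of well-definedness to ``triviality of the holonomy of $\nabla^\perp$ on $\tilde D$'' also conflates holonomy of the normal bundle with holonomy of the local product decomposition of $V$; what one actually needs is that the local de Rham splittings patch coherently, which again comes from parallelism of both distributions rather than from flatness of $\nabla^\perp$ alone.
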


\section{Conullity 2}
\label{section:conullity2}

We now assume throughout this section that, for $n \geq 3$, $M^n$ has conullity exactly 2 and $\sec \geq 0$.
We work towards the proof of Theorem~\ref{thm:pos_scal_CN2}.
Since $\sec \geq 0$, $M$ has a soul $S \subset M$, see  \cite{soul_thm, soul_conj}.

The following proposition from \cite{graph_manifolds} covers the finite-volume case without a curvature assumption.
\begin{prop}
\label{prop:finite_volume_splitting}
If a complete manifold $M$ has conullity at most 2 and has finite volume, then its universal cover $\widetilde M$ splits isometrically as $\Sigma \times \R^{n-2}$ for some complete surface $\Sigma$.
\end{prop}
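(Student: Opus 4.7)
The strategy is to prove that the splitting tensor $C_T$ vanishes identically for every $T \in \ker R$. Once $C_T \equiv 0$, the distribution $\ker R^\perp$ is parallel (not merely preserved along nullity leaves), so Proposition~\ref{prop:de_rham_splitting} produces an isometric splitting $\widetilde M \cong \widetilde D \times \R^{n-2}$; since $\ker R^\perp$ has rank $2$, the factor $\Sigma := \widetilde D$ is a complete surface, as desired.

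I would first localize to the open dense subset $U = \{p \in M : \dim \ker R_p = n-2\}$, where $\ker R^\perp$ is a smooth rank-$2$ bundle and $C_T$ is a $2 \times 2$ endomorphism with no nonzero real eigenvalue. Along a nullity geodesic $\gamma$, the Ricatti equation~\eqref{eqn:ricatti} integrates to $C(t) = C_0(I - tC_0)^{-1}$, so $C_0$ falls into one of three types: the zero matrix; a nonzero nilpotent matrix, in which case $C(t) \equiv C_0$ is constant and $(I-tC_0)^{-1}$ is a non-trivial affine shear on the transverse frame; or a matrix with a non-real complex conjugate pair of eigenvalues, in which case $C(t)$ is globally defined, bounded and nowhere zero, with eigenvalues $\mu(t) = \mu_0/(1 - t\mu_0)$ tracing a closed curve in $\C$ that passes through the origin as $t \to \pm\infty$.

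The main obstacle is ruling out the two non-trivial cases using the finite volume hypothesis. My plan is to apply Poincar\'e recurrence to the flow of a suitably chosen local nullity vector field: in either non-trivial case, $C_T$ induces a persistent, non-trivial transformation of the transverse $\ker R^\perp$-frame along nullity directions---a linear shear for nilpotent $C_0$, a rotation-like endomorphism for complex eigenvalues---which is incompatible with the requirement that almost every orbit return arbitrarily close to itself. Combining this recurrence with the super-Ricatti identity~\eqref{eqn:super_ricatti}, which controls how $C_T$ varies under transverse differentiation, should force $C_T \equiv 0$ on $U$. Delicate issues to work through are (i) choosing $T$ so that its flow is measure-preserving on an appropriate slice---one may need to arrange $\trace C_T = 0$ by exploiting the parallel flat structure on leaves---and (ii) matching the closed-geodesic consistency $C(L) = C(0)$, taken together with a possible non-trivial normal holonomy around the loop, to rule out nilpotent $C_0 \neq 0$. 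Once $C_T \equiv 0$ on $U$, the parallel structure extends across the closed, nowhere-dense set $M \setminus U$ by continuity and completeness of the nullity leaves, and Proposition~\ref{prop:de_rham_splitting} yields the claimed splitting of $\widetilde M$.
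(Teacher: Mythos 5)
Your overall framework is the right one---show the splitting tensor $C_T$ vanishes everywhere, then invoke Proposition~\ref{prop:de_rham_splitting}---and this matches the strategy that the paper attributes to \cite{graph_manifolds} and reproduces in detail for the conullity-3 analogue (Theorem~\ref{thm:local_fin_volume_CN3}). Your treatment of the complex-eigenvalue case is also essentially correct: since $\det C_0 > 0$ forces $\trace C(t) \leq 0$ eventually, the flow of a nullity field becomes volume non-decreasing, weak recurrence produces accumulating points, and the explicit formula for $\det C(t)$ pushes the determinant to zero at any accumulation point, a contradiction.

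The genuine gap is in the nilpotent case. You claim that because $(I - tC_0)^{-1}$ is a nontrivial shear, the flow is ``incompatible with the requirement that almost every orbit return arbitrarily close to itself.'' But this is not a contradiction: for a $2 \times 2$ nilpotent $C_0$ one has $C_0^2 = 0$, hence $C(t) = C_0(I - tC_0)^{-1} = C_0$ is \emph{constant} along nullity geodesics, and nothing about the splitting tensor grows. Volume is preserved ($\trace C = 0$), Poincar\'e recurrence applies, but it gives you no direct contradiction to the frame ``shearing.'' The actual argument (visible in the paper's parallel conullity-3 proof) is considerably more involved: one picks a canonical orthonormal frame adapted to $\ker C$ and $\im C$, writes out all connection coefficients $\omega^k_{ij}$, uses $R(T,\cdot)\cdot = 0$ to show some are constant along nullity geodesics and others grow linearly, applies Poincar\'e recurrence to kill the linearly-growing ones, imposes $\langle R(\cdot,\cdot)\cdot, T\rangle = 0$ and a second Bianchi identity to kill further coefficients, and finally deduces $R(e_1,\cdot)\cdot \equiv 0$---contradicting conullity exactly $2$. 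Your suggestion of using a ``closed-geodesic consistency $C(L) = C(0)$'' is not available; nullity geodesics need not be closed.

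One further point: your proposed device for choosing $T$---``arrange $\trace C_T = 0$''---is not what makes the nullity vector field canonical. As the paper notes, the map $T \mapsto C_T$ from $\ker R$ to $2 \times 2$ matrices has image avoiding nonzero self-adjoint matrices (since $C_T$ has no nonzero real eigenvalues), hence has image of dimension at most one. The canonical $T$ is a unit field orthogonal to $\ker(T \mapsto C_T)$, after passing to a cover if needed. Also, $\trace C_T$ need not be zero in the complex case, which is why the argument there uses a volume \emph{non-decreasing}, rather than volume-preserving, flow together with weak recurrence.
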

We will use this result by applying it to a soul of $M$ in the case where $\ker R$ is orthogonal to $S$.

The following lemma will apply for the opposite case, where $TS \subset \ker R$ and will also be used in the proof of Theorem~\ref{thm:pos_curv_CN3}.
\begin{lemma}
Suppose that $M$ has a soul $S$ of dimension at least one.
If $S$ is flat, $\widetilde M$ splits isometrically with a Euclidean factor.
\label{lemma:flat_soul}
\end{lemma}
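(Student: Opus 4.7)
The plan is to first establish $TS \subset \ker R$ along $S$, and then use compactness of the flat soul together with the Riccati analysis of the splitting tensor to produce a parallel distribution on $\widetilde M$ whose integral leaves provide the Euclidean factor by the de Rham decomposition.

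For $TS \subset \ker R|_S$, I would combine two ingredients. The Cheeger--Gromoll flat strip lemma for souls yields flat strips $(s, t) \mapsto \exp_{c(t)}(sV(t))$ from geodesics $c \subset S$ and parallel normal fields $V$, whence $\sec(X, V) = 0$ for $X \in T_pS$ and $V \in T_pS^\perp$. Since $S$ is totally geodesic and flat, we also have $\sec(X, Y) = 0$ for $Y \in T_pS$. For an arbitrary $Y = Y_1 + Y_2 \in T_pM$ with $Y_1 \in T_pS$ and $Y_2 \in T_pS^\perp$, non-negativity of the quadratic form $Y \mapsto \langle R(X, Y)Y, X\rangle$ together with its vanishing on each summand forces the cross term to vanish by polarization, giving $\sec(X, Y) = 0$ for every $Y$. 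The standard consequence in $\sec \geq 0$, obtained by expanding $\langle R(X + sW, Y)Y, X + sW\rangle \geq 0$ in $s$ and combining with the first Bianchi identity, is then $R(X, \cdot)\cdot = 0$, i.e., $X \in \ker R_p$.

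With $TS \subset \ker R$, every geodesic $\gamma_T$ in $S$ is a complete nullity geodesic. For a unit $T \in T_pS$, write $C_T$ in a parallel frame of $\ker R^\perp$ along $\gamma_T$; then $C' = C^2$ with explicit solution $C(t) = C_0(I - tC_0)^{-1}$, and completeness excludes nonzero real eigenvalues of $C_0$. Along a closed geodesic of length $\ell$ in $S$ (which exists because $S$ is compact of dimension $\geq 1$), iteration gives $C(k\ell) = C_0(I - k\ell C_0)^{-1}$, which must equal $A^{-k} C_0 A^k$, where $A \in O(2)$ is the parallel-transport holonomy of $\ker R^\perp$ around $\gamma$. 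The left hand side tends to zero in the complex-conjugate-eigenvalue case, while the right hand side has constant operator norm since $A$ is an isometry; this rules out complex eigenvalues, leaving only $C_0 = 0$ or $C_0$ nilpotent with $[A, C_0] = 0$. The Riccati identity $\nabla_U C_S = C_{\nabla_U S} + C_S C_U$, applied with $U, S$ ranging over $TS$, should transport this commutation constraint to distinguish a canonical parallel line in $\ker R^\perp|_S$ (or force $C_T \equiv 0$ altogether). Propagating this line along the nullity leaves to a globally parallel distribution on $\widetilde M$ and invoking Proposition~\ref{prop:de_rham_splitting} then yields the Euclidean splitting.

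The main obstacle will be the nilpotent regime $C_0^2 = 0$, $C_0 \neq 0$: there $C(t) \equiv C_0$, the Riccati evolution is stationary, and recurrence-type arguments are powerless. I expect to handle it by combining the commutation $[A, C_0] = 0$ imposed by different elements of $\pi_1(S)$ (multiple homotopy classes when $\dim S \geq 2$, and iterated powers of a single class when $\dim S = 1$) with the Riccati identity to extract an invariant line, and then using the completeness of the nullity foliation and Proposition~\ref{prop:de_rham_splitting} to upgrade the constructed line into a globally parallel distribution on $\widetilde M$.
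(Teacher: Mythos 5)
Your proposal takes a genuinely different route from the paper, and as written it does not close. The key gap is the nilpotent regime, which you flag yourself but do not resolve: when $C_0$ is a nonzero nilpotent $2\times 2$ matrix, $C(t)\equiv C_0$ is stationary, the holonomy constraint $[A,C_0]=0$ does not force $C_0=0$ (take $A=\pm I$), and you give only the intention to combine commutation constraints from $\pi_1(S)$ with the Riccati identity, not an actual argument. There is also a smaller gap in the complex-eigenvalue case: the iteration $C(k\ell)=C_0(I-k\ell C_0)^{-1}$ only sees tangent directions to the particular closed geodesic you chose, so even if it succeeds it controls $C_T$ for those $T$ only, not for all $T\in T_pS$. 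Finally, your plan to extract a parallel line in $\ker R^\perp|_S$ and propagate it implicitly relies on the conullity hypothesis (so that $C_T$ is a $2\times 2$ matrix), which the lemma does not need.

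The paper's proof avoids the splitting tensor entirely. It observes that $\widetilde S\cong\R^m$ is flat and, because $\pi_1(S)=\pi_1(M)$, embeds in $\widetilde M$ as a totally geodesic, totally convex submanifold. Any straight line $L$ in $\widetilde S=\R^m$ is then a line in $\widetilde M$: a minimizing geodesic between two points of $L$ must lie in $\widetilde S$ by total convexity, and in flat $\R^m$ it must be $L$ itself. The Cheeger--Gromoll splitting theorem then gives $\widetilde M\cong N^{n-1}\times\R$, and iterating peels off the full $\R^m$ factor. This is shorter, needs no assumption on conullity, no recurrence or holonomy analysis, and sidesteps the nilpotent case. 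Your first reduction ($TS\subset\ker R$ via flat strips, flatness of $S$, and the standard $\sec\geq 0$ polarization lemma) is correct and potentially useful elsewhere, but the lemma neither needs it nor is proved by the splitting-tensor program you outline.

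Separately, your plan to ``invoke Proposition~\ref{prop:de_rham_splitting}'' would require showing $\ker R$ (or some rank-one subdistribution) is parallel with complete leaves on an open set, which is stronger than finding an invariant line at points of $S$; that transport from $S$ to $\widetilde M$ is exactly the work the Cheeger--Gromoll line argument does for free.
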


\begin{proof}
In this case, since $S$ is flat, we know that its universal cover $\widetilde S$ is flat $\R^m$.

Let $\nu(S)$ be the normal bundle of $S$ in $M$.
If $\pi: \widetilde S \rightarrow S$ is the universal covering of $S$, then $\widetilde M$ is diffeomorphic to the pullback bundle
\[ \pi^*(\nu(S)) = \curly{(\widetilde p, X) \Big| \; \widetilde p \in \widetilde S, \; X \in T_{\pi(\widetilde p)} S^\perp}. \]
This follows from the covering map $\pi^*(\nu(S)) \rightarrow \nu(S) \approx M$ to the normal bundle of $S$, which is diffeomorphic to $M$.
Specifically, this map is $(\widetilde p, X) \mapsto (\pi(\widetilde p), X)$.
Moreover $\pi^*(\nu(S))$ is simply connected since $\pi_1(\widetilde S) = 0$.
Hence $\pi^*(\nu(S))$ is the universal cover of $M$.
So $\widetilde M$ is diffeomorphic to $\pi^*(\nu(S))$, a vector bundle over Euclidean space $S = \R^m$.
Hence $\widetilde M$ is diffeomorphic to $\R^n$.

Suppose that $m > 0$ so that the soul $S$ is not just a point.
The fact that $\pi_1(S) = \pi_1(M)$ implies that $\widetilde S$ embeds in $\widetilde M$, since distinct homotopy classes of paths in $S$ are still distinct in $M$.
Since $S$ is totally geodesic and totally convex in $M$, so is $\widetilde S$ in $\widetilde M$.

Now take a line $L$ in $\widetilde S = \R^m$ and any two points $x,y$ on the line $L$.
Then any minimizing geodesic in $M$ from $x$ to $y$ must lie in $\widetilde S$, since $\widetilde S$ is totally convex, and the only such geodesic is the line $L$ itself.
Hence, by the splitting theorem, $\widetilde M$ splits isometrically as $N^{n-1} \times \R$ \cite{toponogov}.
Here $N^{n-1}$ has a soul with dimension at most $m-1$.
This process can be repeated until $\widetilde M = N^{n-m} \times \R^m$ isometrically with flat $\R^m$ for some manifold $N^{n-m}$ with soul a point
In particular, $N^{n-m}$ is diffeomorphic to $\R^{n-m}$.
\end{proof}

Since $M$ has conullity 2, at each point $p \in M$ there is an orthonormal basis of the form $\{e_1, e_2, T_1, \ldots, T_{n-2}\}$ of $T_p M$ with $T_i \in \ker R$ and $\Sec(e_1,e_2) = \Scal$.
Now we consider how $T_1, ... T_{n-2}$ relates to the soul of $M$.

\begin{lemma}
\label{lemma:soul_projection}
If $T \in \ker R_p$ at a point $p \in S$, then the orthogonal projections $T^S \in T_p S$ and $T^N \in T_p S^\perp$ are also in $\ker R_p$.
\end{lemma}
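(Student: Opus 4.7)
The plan is to show that the two-plane $\Pi := (\ker R_p)^\perp$ is either contained in $T_pS$ or contained in $T_pS^\perp$; the lemma then follows at once, since $\Pi \subset T_pS$ forces $T_pS^\perp \subset \Pi^\perp = \ker R_p$ (so $T^N \in \ker R_p$ and $T^S = T - T^N \in \ker R_p$), and the opposite containment is symmetric.

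First I would record the algebraic form of the curvature tensor at $p$. Conullity 2 together with $\sec\geq 0$ forces $\Scal > 0$ pointwise (otherwise $e_1$ and $e_2$ would themselves lie in $\ker R$), and a direct computation in the basis $\{e_1,e_2,T_1,\ldots,T_{n-2}\}$ yields
\[
R(X,Y)Z \;=\; \Scal\bigl[\langle \tilde Y,\tilde Z\rangle\,\tilde X - \langle \tilde X,\tilde Z\rangle\,\tilde Y\bigr],
\]
where $\tilde X$ denotes the orthogonal projection of $X$ onto $\Pi$. Consequently $\sec(X,Y) = \Scal\,|\tilde X\wedge \tilde Y|^2/|X\wedge Y|^2$, so a 2-plane has zero sectional curvature exactly when the $\Pi$-projections of its generators are linearly dependent.

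The decisive external input is the rigidity of the soul coming from Perelman's resolution of the Soul Conjecture: the Sharafutdinov retraction $\pi\colon M\to S$ is a smooth Riemannian submersion with vanishing $A$-tensor and totally geodesic fibres. O'Neill's formula for mixed sectional curvatures then gives $\sec_M(v,N) = 0$ for every $v\in T_pS$ and $N \in T_pS^\perp$. Plugging this into the curvature formula yields $\tilde v\wedge \tilde N = 0$ in $\Pi$ for every such pair $(v,N)$, i.e.\ whenever both projections are nonzero they lie on a common line in $\Pi$.

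To finish, I run a short dimension count. If some $v_0\in T_pS$ had $\tilde v_0\neq 0$ and some $N_0 \in T_pS^\perp$ had $\tilde N_0\neq 0$, the parallelism would force both $P_\Pi(T_pS)$ and $P_\Pi(T_pS^\perp)$ into the single line $\R\,\tilde v_0 \subset \Pi$; but $P_\Pi(T_pS) + P_\Pi(T_pS^\perp) = P_\Pi(T_pM) = \Pi$ is two-dimensional, a contradiction. Hence $P_\Pi(T_pS) = 0$ or $P_\Pi(T_pS^\perp) = 0$, i.e.\ either $T_pS \subset \ker R_p$ or $T_pS^\perp \subset \ker R_p$, and the lemma follows as in the first paragraph. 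The real obstacle is setting up the Perelman rigidity input correctly: without it the algebraic case analysis, for example $(\dim\Pi\cap T_pS, \dim\Pi\cap T_pS^\perp) = (0,1)$, admits configurations in which one can explicitly construct $T \in \ker R_p$ whose projection $T^S$ is not in $\ker R_p$, so the soul-theoretic rigidity is doing essential work.
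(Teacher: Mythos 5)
Your argument is correct in substance, and it actually takes a genuinely different (and arguably cleaner) route than the paper. You prove the stronger statement that the conullity plane $\Pi = (\ker R_p)^\perp$ lies entirely in $T_pS$ or entirely in $T_pS^\perp$, which is the combined content of this lemma and the next (Lemma~\ref{lemma:soul_basis}), and you do it via the explicit algebraic form of a conullity-2 curvature tensor plus a short dimension count. The paper instead works one vector at a time: it uses the swap identity $\langle R(T^S,X)Y,Z\rangle = -\langle R(T^N,X)Y,Z\rangle$ together with the flatness and total geodesy of the flat strips at the soul to kill each of the three pieces of $\sec(e,T^N)$ directly. Your approach buys conceptual clarity and gets Lemma~\ref{lemma:soul_basis} for free; the paper's is more elementary in that it never writes down a closed form for $R$.

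One point you should fix in the write-up: the external input is stated incorrectly. You claim the Sharafutdinov retraction is a Riemannian submersion with vanishing $A$-tensor and totally geodesic fibres, and then invoke O'Neill. That would force the manifold to split locally as a metric product (Hermann's theorem), which is of course false in general for nonnegatively curved manifolds with a nontrivial soul (e.g.\ Cheeger--Gromoll metrics on $TS^2$). What is actually true, and what both you and the paper need, is the flat strip theorem from Perelman's proof of the Soul Conjecture: for $p\in S$, $v\in T_pS$, and $N\in\nu_pS$, the totally geodesic flat rectangle $(s,t)\mapsto\exp_{\gamma(s)}(tN(s))$ (with $\gamma'(0)=v$ and $N$ parallel) realizes $\sec(v,N)=0$. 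Replacing the O'Neill argument with a direct appeal to the flat strips gives exactly the input your dimension count requires, and your proof then goes through without change.
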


\begin{proof}
First observe that since $T = T^S + T^N \in \ker R_p$, that
\begin{align}
\label{eqn:swap}
\chevron{R(T^S, X) Y, Z} = -\chevron{R(T^N, X)Y, Z}
\end{align}
for any $X,Y,Z$.
Take a unit vector $e \in T_p M$ orthogonal to $T$ and write $e^S$ and $e^N$ as its projections.
Then
\begin{align*}
\Sec(e,T^N)
    &= \underbrace{\chevron{R(e^N, T^N) T^N, e^N}}_{(a)}
        + \underbrace{2\chevron{R(e^S,T^N) T^N, e^N}}_{(b)}
        + \underbrace{\chevron{R(e^S, T^N) T^N, e^S}}_{(c)}
\end{align*}
This last term $(c)$ is 0 since it is the sectional curvature of one of the flat strips from the proof of the Soul Conjecture \cite{soul_conj}.
The first term $(a)$ can be written using~\eqref{eqn:swap} as
\[ (a) = - \chevron{R(e^N, T^S) T^N, e^N} = \chevron{R(e^N, T^S) T^S, e^N}\]
which is again the curvature of a flat strip and hence zero.

For $(b)$, we use the fact that the flat strip spanned by $e^S$ and $T^N$ is totally geodesic, and so $R(e^S, T^N) T^N$ is in the span of $\{e^S, T^N\}$ and hence $(b) = 0$.

This shows that $T^N$ has $\Sec(T^N, \cdot) = 0$.
Using~\ref{eqn:swap} twice then also gives that $\Sec(T^S, \cdot) = 0$.

	This is sufficient to show that $T^N$ and $T^S$ are in $\ker R_p$, as any $X \not\in \ker R_p$ has $\Sec(X,Y) \not= 0$ for some $Y$.

\end{proof}

Our next lemma tells us how to choose a basis of the tangent space at a point of the soul that fits nicely with both the soul structure and the conullity 2 structure.
The result is illustrated in the case of four dimensional manifolds in Figure~\ref{fig:soul_basis}.

\begin{lemma}
\label{lemma:soul_basis}
For $p \in S$, there exists an orthonormal basis $B = \{e_1, e_2, T_1, \ldots, T_{n-2}\}$ of $T_p M$
so that each basis vector $v \in B$ is either in $T_p S$ or in $T_p S^{\perp} \subset T_p M$ and $B$ satisfies the relations
\[ R(T_j, \cdot) = 0, \quad \Sec(e_1, e_2) = \Scal. \]
Moreover, $e_1$ and $e_2$ are either both in $T_p S$ or both in $T_p S^\perp$.
\end{lemma}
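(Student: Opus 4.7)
The plan is to apply Lemma~\ref{lemma:soul_projection} to decompose $\ker R_p$ compatibly with the soul splitting $T_pM = T_pS \oplus T_pS^\perp$, and then to show that the "mixed" case cannot occur because of the Cheeger--Gromoll flat strips.

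First I would set $K_S := \ker R_p \cap T_pS$ and $K_N := \ker R_p \cap T_pS^\perp$. Lemma~\ref{lemma:soul_projection} says that any $T \in \ker R_p$ splits as $T^S + T^N$ with both summands in $\ker R_p$, so $\ker R_p = K_S \oplus K_N$ as an orthogonal sum. Writing $s = \dim S$, let $a = s - \dim K_S$ and $b = (n-s) - \dim K_N$; these are the codimensions of $K_S$ in $T_pS$ and of $K_N$ in $T_pS^\perp$. Since $\dim \ker R_p = n-2$, we have $a,b \geq 0$ and $a+b = 2$, so the only possibilities are $(a,b) \in \{(2,0),(1,1),(0,2)\}$.

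In the cases $(2,0)$ and $(0,2)$, the $2$-plane $(\ker R_p)^\perp$ lies entirely in $T_pS$ or entirely in $T_pS^\perp$. I would then take $\{e_1, e_2\}$ to be any orthonormal basis of $(\ker R_p)^\perp$ (automatically contained in one of the two subspaces) and build $\{T_1,\dots,T_{n-2}\}$ by concatenating an orthonormal basis of $K_S$ (in $T_pS$) with an orthonormal basis of $K_N$ (in $T_pS^\perp$). Since $\ker R_p^\perp$ is $2$-dimensional, $\Sec(e_1,e_2)$ is the unique non-zero sectional curvature and thus equals $\Scal$ in the sense used throughout this section.

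The main obstacle, and the crux of the lemma, is excluding the mixed case $(a,b) = (1,1)$. Suppose it occurs; then $(\ker R_p)^\perp$ is spanned by some unit $u \in T_pS$ and some unit $v \in T_pS^\perp$, necessarily orthogonal. The Cheeger--Gromoll construction in the proof of the Soul Theorem produces, for any such $u$ and $v$, a totally geodesic flat strip generated by a geodesic in $S$ with initial direction $u$ together with parallel translates of $v$ along it; consequently $\Sec(u,v) = 0$. Now observe that $R_p$ is entirely determined by its restriction to $(\ker R_p)^\perp$: the symmetries of $R$ together with $R(T,\cdot) = 0$ for $T \in \ker R_p$ imply
\[
\langle R_p(X,Y)Z,W\rangle = \langle R_p(X^\perp,Y^\perp)Z^\perp,W^\perp\rangle,
\]
where $(\cdot)^\perp$ denotes projection onto $(\ker R_p)^\perp$. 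On a $2$-plane, $R$ is determined by its sectional curvature, so $\Sec(u,v) = 0$ forces $R_p \equiv 0$, contradicting the assumption that $M$ has conullity exactly $2$. Hence $(a,b) = (1,1)$ is impossible, and combined with the two remaining cases this proves the lemma.
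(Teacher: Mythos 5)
Your proof is correct and follows essentially the same route as the paper: apply Lemma~\ref{lemma:soul_projection} to split $\ker R_p$ into pieces in $T_pS$ and $T_pS^\perp$, build the basis accordingly, and use the flat strips from the soul construction to rule out the mixed case for $\{e_1,e_2\}$. Your version makes the dimension count explicit and phrases the final contradiction as ``$\Sec(u,v)=0$ would force $R_p\equiv 0$'' rather than as a clash with $\Scal>0$, but these are equivalent once one notes that on a two-plane the curvature tensor is determined by its single sectional curvature.
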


\begin{proof}
Pick any basis $S_1, \ldots, S_{n-2}$ of $\ker R_p$.
Then $S_1^N, \ldots, S_{n-2}^N, S_1^S, \ldots, S_{n-2}^S$ also spans $\ker R_p$ by Lemma~\ref{lemma:soul_projection}, so take a subset which is a basis and call it $T_1, \ldots, T_{n-2}$.
Now chose $e_1, e_2$ perpendicular to the span of $T_1, \ldots, T_{n-2}$ with each $e_i$ either in $T_p S$ or $T_p S^\perp$.
Then $e_1, e_2$ span $\ker R_p^\perp$ and $\{e_1, e_2, T_1, \ldots, T_{n-2}\}$ is our desired basis.

Moreover, note that if $e_1 \in T_p S$ and $e_2 \in T_p S^\perp$, then there is a flat strip spanned by $e_1$ and $e_2$, so $\Sec(e_1, e_2) = 0$, which is a contradiction with the assumption that $\Scal > 0$ everywhere.
So $e_1$ and $e_2$ must both be in $T_p S$ or both be in $T_p S^\perp$.
\end{proof}

We now prove Theorem~\ref{thm:pos_scal_CN2}.
\begin{proof}

\begin{figure}
	\includegraphics[scale=0.2]{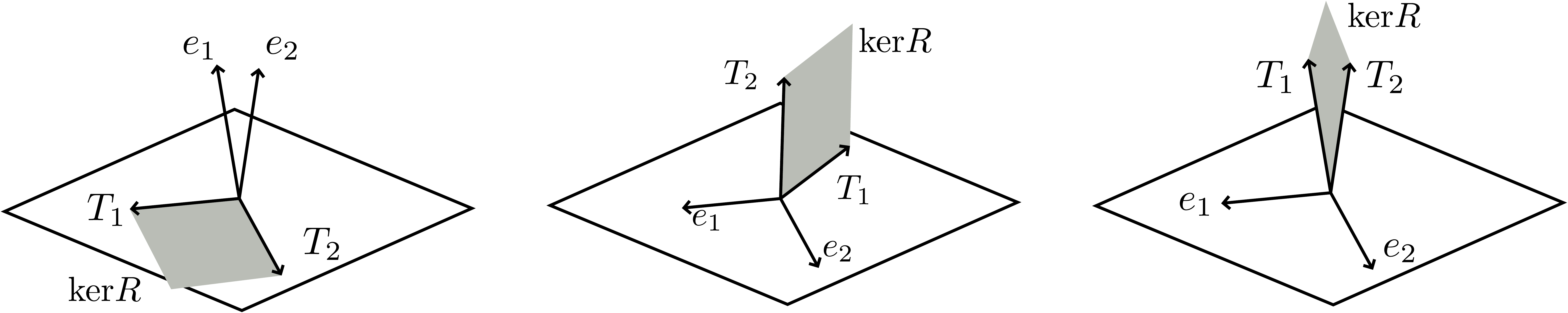}
	\caption{All possibilities for the orthonormal basis from Lemma~\ref{lemma:soul_basis} illustrated in the case of a four dimensional manifold and two or three dimensional soul.}
	\label{fig:soul_basis}.
	\centering
\end{figure}

Let $S$ be a soul of $M$.
If $S$ has dimension zero, then $M \approx \R^n$ and we are done.
Since Proposition~\ref{prop:de_rham_splitting} covers the case where $M$ is compact, we may also assume that $S$ does not have dimension $n$.

Using Lemma~\ref{lemma:soul_basis}, if $e_1$ and $e_2$ are in $T_p S$ at one point of $S$, they must be so at every point of $S$.
So there are now two cases: the case where $e_1, e_2 \in T_p S^\perp$ for all $p \in S$ and the case where $e_1, e_2 \in T_p S$ for all $p \in S$.

In the first case, the soul of $M$ is flat.
We apply Lemma~\ref{lemma:flat_soul} to conclude that $\widetilde M$ splits isometrically.

In the second case, the soul $S$ of $M$ is a compact manifold with conullity 2 at each point.
So we may apply Proposition~\ref{prop:finite_volume_splitting} to the soul to get that $\widetilde S$ is isometric to $\widetilde D \times \R^{m-2}$ where $m$ is the dimension of $S$.
Here $\widetilde D$ is a simply connected surface with positive Guassian curvature.
The curvature on $\widetilde D$ is bounded away from zero since $S$ is compact, and hence $\widetilde D$ is compact and therefore diffeomorphic to $\mathbb{S}^2$.

Now we examine the splitting tensor of $M$ at $p \in S$.
If $T \in T_p S$ and $T \in \ker R$, then $C_T = 0$ by the splitting of $\widetilde S$.
Otherwise, assume that $T$ is perpendicular to $T_p S$.
For $X \in \ker R_p^\perp$, the flat strip spanned by $X$ and $T$ is totally geodesic.
Since $C_T$ is a tensor, we can choose to consider extensions of $X$ and $T$ to vector fields contained in that flat strip.
For these extensions, $\nabla_X T$ is in the span of $X$ and $T$.
Since $C_T(X) \in \ker R_p^\perp$, it must be perpendicular to $T$ and hence $X$ is an eigenvector of $C_T(X)$ with a real eigenvalue.
The only possible such eigenvalue is $0$.
So $C_T = 0$ as well.

So all splitting tensors are zero on $S$.
For any other point $p \in M$, $p = \exp_{p_0}(U)$ for some $p_0 \in S$ and $U \in T_pS^\perp$.
Since $e_1, e_2 \in T_p S$, we know that $U \in \ker R_p$.
By \eqref{eqn:super_ricatti}, $C_U \equiv 0$ along $\gamma(t) = \exp_{p_0}(tU)$ since $C_U = 0$ at $p_0 \in S$.
For any $T \in \ker R_{p_0}$, we know that $C_T = 0$ at $p_0$.
By \eqref{eqn:super_ricatti} extending $T$ parallel along $\gamma$, we get that
\[ \nabla_U C_T = C_T C_U = 0. \]
Hence, $C_T \equiv 0$ along $\gamma$ and in particular $C_T = 0$ at $p$.
Since $\ker R$ is parallel along $\gamma$, $C_T = 0$ at $p$ for all $T \in \ker R_p$.

So all splitting tensors are identically zero on $M$.
By Proposition~\ref{prop:de_rham_splitting}, we conclude that $\widetilde M$ splits isometrically as $\widetilde D \times \R^{n-2}$ with the Euclidean metric on $\R^{n-2}$ for some surface $\widetilde D$.
\end{proof}


\section{Conullity 3}
\label{section:conullity3}

We will first prove Theorem~\ref{thm:fin_volume_CN3} and then use it to prove Theorem~\ref{thm:pos_curv_CN3}.

Recall that in conullity at most 3, any splitting tensor $C_T$ is a $3 \times 3$ matrix in a parallel basis along $\gamma$.
Hence $C_T$ has at least one real eigenvalue.
Recall that the real eigenvalues of $C$ are all zero, and hence $C_T$ has 0 as an eigenvalue.
The two possibilities are then that either $C_T$ has two complex eigenvalues and one 0 eigenvalue or that $C_T$ is nilpotent.

\subsection{Finite Volume}
\label{sec:finvolume_split_proof}

We now prove a more general version of Theorem~\ref{thm:fin_volume_CN3}, following closely the strategy in \cite{graph_manifolds} for the proof of Proposition~\ref{prop:finite_volume_splitting}.
\begin{theorem}
\label{thm:local_fin_volume_CN3}
Assume that $M^4$ is a complete, finite volume Riemannian manifold with positive nullity.
Let $V$ be a connected open subset of $M$ on which the nullity leaves are complete and $\dim \ker R = 1$.
Then the universal cover of $V$ splits isometrically as $\tilde D \times \R$ where $D$ is a maximal leaf of $\ker R$ in $V$.
\end{theorem}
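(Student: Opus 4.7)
The plan is to reduce everything to showing that the splitting tensor $C_T$ vanishes identically on $V$ for every $T \in \ker R$: once $\ker R$ is parallel, Proposition~\ref{prop:de_rham_splitting} gives exactly the asserted splitting of the universal cover of $V$ as $\tilde D \times \R$, with $\tilde D$ the universal cover of a maximal leaf of $\ker R^\perp$. The strategy will closely mirror the proof of Proposition~\ref{prop:finite_volume_splitting} in \cite{graph_manifolds}, with the additional book-keeping imposed by the fact that $C_T$ is now a $3 \times 3$ rather than a $2 \times 2$ matrix.

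First, fix a unit vector field $T$ locally spanning $\ker R$ and analyze $C_T$ along a nullity geodesic $\gamma$. Completeness of the nullity leaves in $V$ ensures $\gamma$ is defined on all of $\R$, so equation~\eqref{eqn:ricatti} gives $C(t) = C_0(I - tC_0)^{-1}$ for all $t \in \R$ in a parallel basis of $\ker R^\perp$ along $\gamma$. All real eigenvalues of $C_0$ therefore vanish, leaving three mutually exclusive nontrivial cases to rule out: (i) $C_0$ has two non-real complex conjugate eigenvalues and a zero eigenvalue; (ii) $C_0$ is nilpotent with $C_0^2 \neq 0$; (iii) $C_0$ is nilpotent with $C_0 \neq 0$ but $C_0^2 = 0$.

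Next I will use finite volume to drive a recurrence argument: since $V$ has finite total volume and the image of $\gamma$ lies in $\bar V$, the trajectory $\gamma(t_n)$ admits a convergent subsequence as $t_n \to \infty$, and together with the (volume-distortion-corrected) Poincaré-type recurrence of the nullity flow this forces, at a dense set of points, that $\|C_T\|$ must match its asymptotic value along $\gamma$. In case~(i), diagonalizing the complex $2\times 2$ block of $C_0$ yields eigenvalues $\lambda/(1 - t\lambda)$ of $C(t)$ and hence $\|C(t)\| = O(1/|t|) \to 0$; in case~(ii) the expansion $(I - tC_0)^{-1} = I + tC_0 + t^2C_0^2$ gives $C(t) = C_0 + tC_0^2$, whose norm grows linearly in $t$. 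In either situation the recurrence statement forces $C_T = 0$ at the initial point, contradicting the case assumption, so (i) and (ii) are excluded.

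Case~(iii) will be the main obstacle, as $C(t) \equiv C_0$ is constant along $\gamma$ and the pure asymptotic argument above says nothing. The plan here is to exploit the rigid algebraic form of a rank-one $3\times 3$ nilpotent: the subspaces $\ker C_0$ and $\mathrm{im}(C_0) \subset \ker C_0$ are each parallel along $\gamma$ and assemble into smooth distributions on the open subset of $V$ where~(iii) holds. Combining this extra parallel structure with equation~\eqref{eqn:super_ricatti} in directions transverse to $\ker R$, and tracking how nearby transverse slices are sheared by the nullity flow upon recurrence, one expects an unbounded shear incompatible with the finite volume of $V$, exactly as in the nilpotent subcase of \cite{graph_manifolds}. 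Once all three cases are excluded, $C_T \equiv 0$ on $V$ and the theorem follows from Proposition~\ref{prop:de_rham_splitting}.
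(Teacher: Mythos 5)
Your reduction to showing $C_T \equiv 0$ on $V$ and then invoking Proposition~\ref{prop:de_rham_splitting} is exactly the paper's route, and your treatment of cases (i) and (ii) follows the paper's. In case (i) the paper works with the explicit formulas~\eqref{tr_det_solns} for $\trace A(t)$ and $\det A(t)$, observes that $\trace C(t)\leq 0$ for large $t$ (so the flow of $T$ is eventually volume non-decreasing), and then uses recurrence to produce an accumulation point where $\det A = 0$ --- a contradiction with being in $V'$. Your case (ii) observation that $C(t) = C_0 + tC_0^2$ grows linearly, so recurrence forces $C_0^2 = 0$, is equivalent to the paper's observation that the entry $c + tab$ must be constant, hence $b = 0$.

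The genuine gap is case (iii), which is the heart of the theorem. Here $C(t)\equiv C_0$ is constant along nullity geodesics, so no norm-growth or shrinkage argument applies to $C$ itself, and the plan you outline --- ``tracking how nearby transverse slices are sheared by the nullity flow upon recurrence, one expects an unbounded shear incompatible with the finite volume'' --- does not actually produce the contradiction. What Poincar\'e recurrence gives you in this case is only that certain connection coefficients which a priori grow linearly in $t$ (namely $\omega^1_{22}$, $\omega^3_{21}$, $\omega^3_{22}$ in the paper's frame) must in fact be constant, and that $\omega^2_{11}=\omega^3_{11}=\omega^3_{12}=0$; it does not force $C=0$. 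The paper then needs several further, non-recurrence inputs: using $\chevron{R(\cdot,\cdot)\cdot,T}=0$ to kill $\omega^1_{33}$, $\omega^2_{31}$, $\omega^1_{22}$; proving $e_1(f)=0$ for any $f$ constant along nullity geodesics; the second Bianchi identity to get $\omega^2_{33}\omega^3_{21}=0$; and finally a direct computation showing $R(e_1,\cdot)\cdot=0$. That last step is the actual contradiction, since it would put $e_1\in\ker R$ and violate $\dim\ker R=1$. Your appeal to ``exactly as in the nilpotent subcase of \cite{graph_manifolds}'' does not cover this: in conullity $2$ a nilpotent $2\times 2$ splitting tensor already satisfies $C^2=0$ and the frame computations are much shorter, which is precisely why the paper flags the conullity-$3$ nilpotent case as requiring ``significantly more computations.'' Without the connection-coefficient analysis and the final curvature contradiction, the argument for case (iii) is not there.
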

\begin{proof}
Define $C := C_T$ on $V$.
We will show that $C = 0$ and hence Proposition~\ref{prop:de_rham_splitting} finishes the proof.
Fix a nullity geodesic $\gamma$ and define $C(t)$ to be $C$ at the point $\gamma(t)$.

First, we look at the case where $C$ has two non-zero complex eigenvalues and one zero eigenvalue.
Then in an appropriate choice of parallel basis along $\gamma$,
\begin{equation}
C(t) = \begin{pmatrix} A(t) & 0 \\ 0 & 0 \end{pmatrix}
\end{equation}
where $A$ is a $2 \times 2$ matrix with 2 complex eigenvalues.
The differential equation \eqref{eqn:ricatti} then easily implies that
\begin{equation} \trace A(t) = \frac {\trace A_0 - 2 t \det A_0}{1 - t \trace A_0 + t^2 \det A_0}
\mbox{ and }
\det A(t) = \frac {det A_0}{1 - t \trace A_0 + t^2 \det A_0}
\label{tr_det_solns}
\end{equation}

Take $B \subset V$ a small compact neighborhood.
Since $\det A(0) > 0$, there is some time $t_0$ so that $\trace C(t) = \trace A(t) \leq 0$ for all $t \geq t_0$.
In the second case, where $C$ is nilpotent, then $\trace C = 0$.
In either case, $\trace C \geq 0$ for $t \geq t_0$ for some $t_0$.

Note that
\begin{equation}
\mbox{div} \; T = \trace \nabla T = - \trace C.
\end{equation}
Now define $B_t := \phi_{t + t_0}(B)$ where $\phi$ is the flow along $T$.
Then
\begin{equation}
\frac{d}{dt} \mbox{vol} \; B_t= \int_B \frac {d}{dt} \phi_{t+t_0}^*(d\,vol)
    = \int \mbox{div\,} T
    = - \int_B \trace C(t + t_0)
    \geq 0
\end{equation}
for all $t \geq 0$.
Hence, the flow of $T$ is volume non-decreasing and we get, by weak recurrence, a sequence of compact neighborhoods $B_{n_k}$, with $\{n_k\} \in \N$ an increasing sequence, so that $B_{n_k}~\cap~B_{n_0}~\not=~\emptyset$.
This gives a sequence of points, $p_k := \phi_{t_0 + n_k}(q_k) \in B_{n_k} \cap B_{n_0}$, with $q_k \in B$, with an accumulation point $p \in B_{n_0} \subset V$.

First consider $V' \subset V$, the open subset on which $C$ has non-zero complex eigenvalues.
By~\eqref{eqn:super_ricatti}, $V'$ is invariant under the flow $\phi_t$ of $T$.
The sequence of points $p_k \rightarrow p$ and \eqref{tr_det_solns} give
\begin{equation} \det A_{T(p)} = \lim_{k \rightarrow \infty} \det A_{T(p_k)}
    = \lim_{k\rightarrow \infty} \frac {\det A_{T(q_k)} }{ 1 - (t_0 - n_k) \trace A_{T(q_k)} + (t_0 - n_k)^2 \det A_{T(q_k)}}
    = 0
\end{equation}
where again $A_{T(q_k)}$ is the $2 \times 2$ block of $C_{T(q_k)}$ with two non-zero complex eigenvalues.
Therefore $A_{T(p)} = 0$ and so $C = 0$.

For the above, note that $\trace A$ and $\det A$ are both independent of the choice of coordinates.
Indeed, $\trace A = \trace C$ and if $\lambda_1, \lambda_2, \lambda_3$ are the eigenvalues of $C$, then
\begin{equation}
\det A_{T(q_k)} = \lambda_1 \lambda_2 =  \lambda_1 \lambda_2 + \lambda_1 \lambda_3 + \lambda_2 \lambda_3  = \frac{(\trace C)^2 - \trace (C^2)} 2.
\end{equation}

Consider next the other case and define $V^*$ to be the open subset of $V$ on which $C$ is nilpotent and non-zero.
The previous case differs only slightly from the argument in conullity 2, but the nilpotent case requires significantly more computations than in the case of conullity 2.

First, we find vector fields on $V^*$ giving a canonical orthonormal basis.
Observe that
\begin{equation}\dim \ker C^2 = \dim \ker C + 1
\end{equation}
since $C$ is $3 \times 3$, nilpotent, and non-zero.
Define $e_2$ to be a unit vector field spanning $\ker C^2 \cap (\ker C)^\perp$ on $V^*$, passing to a double cover of $V$ if necessary.
Then let $e_1$ be a unit vector field parallel to $C(e_2)$ and $e_3$ a unit vector field perpendicular to $e_1$ and $e_2$, passing to a cover of $V^*$ if necessary.
This gives an orthonormal basis of vector fields $\{e_1, e_2, e_3\}$ on which we can write $C$ as
\begin{equation}C = 
\begin{pmatrix}
0 & a & c \\
0 & 0 & b \\
0 & 0 & 0
\end{pmatrix}
\end{equation}
Note that by this construction, $a$ is non-zero at every point on $V^*$, though $b$ and $c$ possibly could be zero.
Moreover, \eqref{eqn:ricatti} shows that $\ker C$ and $\ker C^2$ are parallel along nullity geodesics, and hence $e_1, e_2, e_3$ are as well.

Then~\eqref{eqn:ricatti} gives
\begin{equation}C(t) = 
\begin{pmatrix}
0 & a & c+tab \\
0 & 0 & b \\
0 & 0 & 0
\end{pmatrix}
\end{equation}
where $a,b,c$ are independent of $t$.

Since the flow $\phi_t$ of $T$ is volume preserving ($\trace C = 0$), the Poincar\'e recurrence theorem says that for almost all $p \in V^*$, there exists a sequence $t_n \rightarrow \infty$ with $\phi_{t_n}(p) \rightarrow p$.
Hence $\chevron{C(t) e_3, e_1} = c+ tab$ must be constant, not linear, and hence $b = 0$ since $M$ has finite volume.
Thus $\ker C(t)$ is 2 dimensional.
This allows us to choose a better basis (again, in a cover of $V^*$, if necessary).
Let $e_2$ be perpendicular to $\ker C$, $e_1$ parallel to $C e_2$ and $e_3$ perpendicular to $e_1, e_2$.
Then $\{e_1, e_2, e_3\}$ is an orthonormal basis so that
\begin{equation*}C = 
\begin{pmatrix}
0 & a & 0 \\
0 & 0 & 0 \\
0 & 0 & 0
\end{pmatrix}
\end{equation*}
and $C$ is constant in the $T$ direction.

We now carry out some computations in this basis.
We have connection coefficients $\omega^k_{ij}$ which satisfy the following.
\begin{align*}
\nabla_T e_1 &= 0 & \nabla_T e_2 &= 0 & \nabla_T e_3 &= 0 \\
\nabla_{e_1} T &= 0 &
\nabla_{e_2} T &= - a e_1 &
\nabla_{e_3} T &= 0 \\
\nabla_{e_1} e_1 &= \omega^2_{11} e_2 + \omega^3_{11} e_3 &
\nabla_{e_2} e_1 &= -\omega^1_{22} e_2 + \omega^3_{21} e_3 + a T &
\nabla_{e_3} e_1 &= \omega^2_{31} e_2 - \omega^1_{33} e_3 \\
\nabla_{e_1} e_2 &= -\omega^2_{11} e_1 + \omega^3_{12} e_3 &
\nabla_{e_2} e_2 &= \omega^1_{22} e_1 + \omega^3_{22} e_3 &
\nabla_{e_3} e_2 &= - \omega^2_{31} e_1 - \omega^2_{33} e_3 \\
\nabla_{e_1} e_3 &= - \omega^3_{12} e_2 - \omega^3_{11} e_1 &
\nabla_{e_2} e_3 &= -\omega^3_{21} e_1 - \omega^3_{22} e_2 &
\nabla_{e_3} e_3 &= \omega^1_{33} e_1 + \omega^2_{33} e_2
\end{align*}
and hence
\begin{align*}
R(T,e_1) e_2 &= -T(\omega^2_{11}) e_1 + T(\omega^3_{12}) e_3 \\
R(T,e_1) e_3 &= -T(\omega^3_{12}) e_2  - T(\omega^3_{11}) e_1 \\
R(T, e_2) e_1 &= -(T(\omega^1_{22}) + a \omega^2_{11}) e_2 + (T(\omega^3_{21}) - a \omega^3_{11}) e_3 \\
R(T, e_2) e_3 &= (-T(\omega^3_{21}) + a \omega^3_{11}) e_1 + (-T(\omega^3_{22}) + a \omega^3_{12})) e_2 \\
R(T, e_3) e_1 &= T(\omega^2_{31}) e_2 - T(\omega^1_{33}) e_3 \\
R(T, e_3) e_2 &= - T(\omega^2_{31}) e_1 - T(\omega^2_{33}) e_3
\end{align*}
We know that all of these must be $0$ since $T \in \ker R$, and hence $\omega^2_{11}, \omega^3_{11}, \omega^3_{12}, \omega^2_{31}, \omega^1_{33}, \omega^2_{33}$ are all constant in $t$.
Moreover, $T(\omega^1_{22}) = - a \omega^2_{11}$, $T(\omega^3_{21}) = a \omega^3_{11},$ and $T(\omega^3_{22}) = a \omega^3_{12}$ show that $\omega^1_{22}, \omega^3_{22}, \omega^3_{21}$ all grow linearly in $t$.
By Poincar\'e recurrence they actually are constant, hence $\omega^2_{11} = \omega^3_{11} = \omega^3_{12} = 0$ and $\omega^1_{22}, \omega^3_{21}, \omega^3_{22}$ are constant in $t$.
In particular, all of the connection coefficients are constant along nullity geodesics.

Since $a$ is constant along nullity geodesics we get that $T(e_1(a)) = [T,e_1](a) - e_1(T(a)) = 0(a) - e_1(0) = 0$ so $e_1(a)$ is also constant along nullity geodesics.
Furthermore,
\begin{equation} T(e_2(a)) = e_2( T(a) ) - [T, e_2] (a) = a e_1(a) \end{equation}
and hence $e_2(a)$ grows linearly along nullity geodesics.
Poincar\'e recurrence again shows that $T( e_2(a) ) = a e_1(a) = 0$, so $e_1(a) = 0$.
Note that this argument shows that $e_1(f) = 0$ for any $f$ that is constant along nullity geodesics.

We also have
\begin{align*}
\chevron{R(e_2, e_3) e_3, T} &= \omega^1_{33} a \\
\chevron{R(e_3, e_2) e_2, T} &= \omega^2_{31} a \\
\chevron{R(e_2, e_1) e_1, T} &= -e_1(a) + \omega^1_{22} a = \omega^1_{22} a
\end{align*}
and since $T \in \ker R$, it follows that $\omega^1_{33} = \omega^2_{31} = \omega^1_{22} = 0$.

The second Bianchi identity gives
\begin{align*}
0
 &= \nabla_{e_1} R (e_2, e_3) e_3 + \nabla_{e_2} R(e_3, e_1) e_3 + \nabla_{e_3} R(e_1, e_3) e_2 \\
 &= \brak{e_1( e_3(\omega^3_{21}) - \omega^3_{22} \omega^3_{21}) + (\omega^2_{33})^2 \omega^3_{21}} e_1 \\
	&\quad + \brak{e_1(e_2(\omega^2_{33}) + e_3(\omega^3_{22}) - (\omega^3_{22})^2 - (\omega^2_{33})^2) + \omega^3_{22} \omega^2_{33} \omega^3_{21} - e_3(\omega^2_{33} \omega^3_{21})} e_2 \\
	&\quad + \brak{-(\omega^2_{33})^2 \omega^3_{21} - e_3(\omega^3_{22}) + e_2(\omega^2_{33}) - (\omega^3_{22})^2 - (\omega^2_{33})^2} e_3
\end{align*}
In particular, $0 = e_1(e_3(\omega^3_{21}) - \omega^3_{22} \omega^3_{21}) + (\omega^2_{33})^2 \omega^3_{21}$.
Since $T(\omega^3_{21}) = 0$, $T(e_3(\omega^3_{21})) = 0$ and so $f := e_3(\omega^3_{21}) - \omega^3_{22} \omega^3_{21}$ is constant along nullity geodesics.
By the argument above that $e_1(f) = 0$ if $T(f) = 0$, we get that $e_1( e_3(\omega^3_{21}) - \omega^3_{22} \omega^3_{21}) = 0$.
The second Bianchi identity then shows that $(\omega^2_{33})^2 \omega^3_{21} = 0$, and in particular $\omega^2_{33} \omega^3_{21} = 0$.

In summary, all of $\omega^2_{11}, \omega^3_{11}, \omega^3_{12}, \omega^1_{33}, \omega^2_{31}, \omega^3_{22}$ are zero and $\omega^2_{33} \omega^3_{21} = 0$ as well.
We use these to show that $R(e_1, \cdot) \cdot = 0$, which is a contradiction with the assumption that $V^*$ has conullity exactly 3.
Direct computation shows that, that $R(e_1, \cdot) \cdot$ is determined by:

\begin{align*}
R(e_1,e_2) e_2
	&= \brak{e_1(\omega^1_{22}) - \omega^3_{22} \omega^3_{11} + e_2(\omega^2_{11}) + \omega^3_{12} \omega^3_{21} + (\omega^2_{11})^2 + \omega^3_{12} \omega^2_{31} + (\omega^1_{22})^2 + \omega^3_{21} \omega^2_{31}} e_1 \\
	&\quad +\brak{e_1(\omega^3_{22}) + \omega^1_{22} \omega^3_{11} - e_2(\omega^3_{12}) + \omega^2_{11} \omega^3_{21} + \omega^2_{11} \omega^3_{12} + \omega^3_{12} \omega^2_{33} + \omega^1_{22} \omega^3_{22} + \omega^3_{21} \omega^2_{33}} e_3 \\
R(e_1,e_2) e_3
	&= \brak{-e_1(\omega^3_{21}) + \omega^3_{22} \omega^2_{11} + e_2(\omega^3_{11}) + \omega^3_{12} \omega^1_{22} - \omega^3_{21} \omega^1_{22} - \omega^3_{21} \omega^1_{33} - \omega^2_{11} \omega^3_{11} - \omega^2_{31} \omega^1_{33}} e_1 \\
	&\quad + \brak{e_1(\omega^3_{22}) - \omega^3_{21} \omega^2_{11} + e_2(\omega^3_{12}) - \omega^3_{11} \omega^1_{22} - \omega^1_{22} \omega^3_{22} - \omega^2_{11} \omega^3_{12} - \omega^3_{12} \omega^2_{33} - \omega^3_{21} \omega^2_{33}} e_2 \\
R(e_1, e_3) e_2
	&= \brak{-e_1(\omega^2_{31}) + \omega^2_{33} \omega^3_{11} + e_3(\omega^2_{11}) + \omega^3_{12} \omega^1_{33} + \omega^3_{12} \omega^1_{22} - \omega^3_{11} \omega^2_{11} + \omega^2_{31} \omega^1_{22} + \omega^3_{12} \omega^2_{31}} e_1 \\
	&\quad + \brak{-e_1(\omega^2_{33}) - \omega^2_{31} \omega^3_{11} - e_3(\omega^3_{12}) - \omega^2_{11} \omega^1_{33} + \omega^3_{12} \omega^3_{22} + \omega^3_{11} \omega^3_{12} + \omega^2_{31} \omega^3_{22} + \omega^1_{33} \omega^2_{33}} e_3 \\
R(e_1, e_3) e_3
	&= \brak{e_1(\omega^1_{33}) - \omega^2_{33} \omega^2_{11} + e_3(\omega^3_{11}) - \omega^2_{31} \omega^3_{12} - \omega^3_{12} \omega^3_{21} - (\omega^3_{11})^2 - \omega^2_{31} \omega^3_{21} - (\omega^1_{33})^2} e_1 \\
	&\quad + \brak{e_1(\omega^2_{33}) + \omega^1_{33} \omega^2_{11} + e_3(\omega^3_{12}) + \omega^3_{11} \omega^2_{31} - \omega^3_{12} \omega^3_{22} - \omega^3_{11} \omega^3_{12} - \omega^2_{31} \omega^3_{22} - \omega^1_{33} \omega^2_{33}} e_2
\end{align*}

Note that all the terms involving an $e_1$ derivative are zero since $e_1(f) = 0$ for all $f$ constant along nullity geodesics.
All terms involving $e_2$ or $e_3$ derivatives are zero since the connection coefficients they differentiate is zero.
All other terms involve a connection coefficient which has been shown to be zero.
Hence, $R(e_1, \cdot) \cdot$ is identically zero, which is a contradiction.


This shows that the splitting tensor $C$ is identically zero  on $V$.
So Theorem~\ref{thm:fin_volume_CN3} follows from Proposition~\ref{prop:de_rham_splitting}.
\end{proof}

Note that the hypothesis that $M$ is 4-dimensional is used only to get the vector field $T$.
In the case of $n$-manifolds that have conullity 2, $T$ was constructed in \cite{graph_manifolds} for any $n > 2$ by noting that $C_T$ is zero if self-adjoint and therefore the image of $T \mapsto C_T$ is a one-dimensional subspace of $2\times 2$ matrices.
Hence $T$ may be taken to be a vector field perpendicular to the kernel of $T \mapsto C_T$.
Such a strategy fails for conullity 3 manifolds, since the space of self-adjoint matrices is only 6 dimensional for $3 \times 3$ matrices.

\subsection{Nonnegative Curvature}

We will now prove Theorem~\ref{thm:pos_curv_CN3} using a similar strategy.
The assumption that $\Sec \geq 0$ implies that $M$ has a compact, totally geodesic soul $S$.
We start with a lemma anologous to Lemma~\ref{lemma:soul_projection}
\begin{lemma}
\label{lemma:CN3_soul_projection}
If $T \in \ker R_p$ at a point $p \in S$, then the orthogonal projections $T^S \in T_p S$ and $T^N \in T_p S^\perp$ are also in $\ker R_p$.
\end{lemma}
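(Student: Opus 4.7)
The approach mirrors the proof of Lemma~\ref{lemma:soul_projection} almost verbatim: nothing in that argument depended on the specific value of the conullity, so the same strategy applies. The plan is as follows. First, write $T = T^S + T^N \in \ker R_p$ and derive the swap identity
\[ \chevron{R(T^S, X)Y, Z} = -\chevron{R(T^N, X)Y, Z} \]
for all $X, Y, Z \in T_pM$. Next, for any unit vector $e \in T_pM$ orthogonal to $T$, decompose $e = e^S + e^N$ and expand
\[ \Sec(e, T^N) = \chevron{R(e^N, T^N)T^N, e^N} + 2\chevron{R(e^S, T^N)T^N, e^N} + \chevron{R(e^S, T^N)T^N, e^S}. \]

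Each of the three terms is to be shown to vanish using the flat, totally geodesic $2$-strips spanned by a tangent and a normal vector at $p \in S$ coming from Perelman's proof of the Soul Conjecture. The third term is the sectional curvature of such a strip directly; the first reduces to one after applying the swap identity twice; the middle term vanishes because total geodesicity of the strip spanned by $e^S$ and $T^N$ places $R(e^S, T^N)T^N$ in $\mathrm{span}\{e^S, T^N\}$, while flatness (together with the curvature identity $\chevron{R(X,Y)Y,Y} = 0$) forces that vector to be zero.

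This gives $\Sec(e, T^N) = 0$ for every $e$ orthogonal to $T$. Since any $2$-plane through $T^N$ meets the hyperplane $T^\perp$ in at least a line, every sectional curvature through $T^N$ can be computed with such an $e$, so $\Sec(T^N, Y) = 0$ for all $Y$ independent of $T^N$. Running the same calculation with the roles of $T^S$ and $T^N$ swapped, or applying the swap identity once more, yields the analogous statement for $T^S$. Because $\sec \geq 0$, a direction whose sectional curvature with every other direction vanishes must lie in $\ker R_p$, so both $T^S$ and $T^N$ are in $\ker R_p$.

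The only step requiring genuine care is the vanishing of the cross term $(b)$, where one must combine total geodesicity (to place $R(e^S, T^N)T^N$ in the strip) with flatness (to force it to be zero) rather than relying on only one of these. There is no further obstacle. In the present setting $\ker R_p$ is one-dimensional, so the content of the lemma is the geometric statement that the nullity line at each soul point is either tangent or normal to $S$; however, this interpretation is not needed inside the proof, which proceeds purely tensorially without reference to the dimension of the kernel.
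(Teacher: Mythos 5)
Your proof is correct, but it is not the paper's proof, and your framing that the argument for Lemma~\ref{lemma:soul_projection} carries over ``almost verbatim'' with ``nothing \dots depend[ing] on the specific value of the conullity'' is not accurate. The final step of the conullity-2 proof reads: ``any $X \notin \ker R_p$ has $\Sec(X,Y) \neq 0$ for some $Y$'' --- this is a purely algebraic fact that is \emph{specific} to a $2$-dimensional complement $(\ker R_p)^\perp$, where the curvature tensor is determined by the single number $\Sec(e_1,e_2) \neq 0$. When $(\ker R_p)^\perp$ is $3$-dimensional this implication fails for a general curvature tensor, which is precisely why the paper rewrites the proof for Lemma~\ref{lemma:CN3_soul_projection}: it fixes an orthonormal triple $\{T^N, U, V\}$ with each vector in $T_pS$ or $T_pS^\perp$ and checks all distinct components $\chevron{R(T^N,X)Y,Z}$ directly by a case analysis, never reducing to sectional curvatures alone. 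You close the same gap differently, by invoking $\Sec \geq 0$: once $\Sec(T^N,\cdot)\equiv 0$, Berger's lemma (the first-variation argument at a minimum of $\sec\geq 0$) gives $R(T^N,Y)Y = 0$ for all $Y$, and polarization plus the first Bianchi identity then forces $R(T^N,\cdot)\cdot = 0$. That is a valid route --- arguably cleaner since it avoids the explicit basis choice and case split --- but you should present it as what it is: a replacement of the conullity-2 endgame rather than a repetition of it, and you should spell out the Berger/polarization step rather than compress it to a single sentence. One more small point in your favor: you correctly observe that in term $(b)$ one must combine total geodesicity with flatness and $\chevron{R(e^S,T^N)T^N,T^N}=0$ to get $R(e^S,T^N)T^N=0$; the paper's shorter statement (``hence $(b)=0$'') tacitly assumes $e^N\perp T^N$, which need not hold, so your version is actually the more careful one.
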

\begin{proof}
This is similar to the proof in the conullity 2 case.
We write $X^S$ and $X^N$ for the orthogonal projections onto $T_pS$ and $T_p S^\perp$ for any $X$.

Suppose for contradiction that $T^N$ is not in $\ker R_p$.
We may rescale $T$ to make $T^N$ unit length for simplicity.
Recall that \eqref{eqn:swap} gives that $\chevron{R(T^N, X) Y, Z} = -\chevron{R(T^S, X)Y, Z}$, and hence if we prove this result for $T^N$, it will follow for $T^S$ as well.
We choose vectors $U,V$ so that $\{T^N, U, V\}$ are orthonormal, $U,V$ are each in either $T_p S$ or $T_p S^\perp$ and they are not in $\ker R_p$.
In particular, to see that $R(T^N, \cdot) \cdot = 0$, it suffices to see that 
\begin{equation*}
    \chevron{R(T^N, X)Y, Z} = 0
\end{equation*}
for all $X,Y,Z \in \{T^N, U, V\}$.
We now proceed through the possibilities for $X,Y,Z$.

By the symmetries of $R$, we have three cases to examine:
\begin{align*}
\mbox{Case } (a): & \chevron{R(T^N, X) T^N, X} &
\mbox{Case } (b): & \chevron{R(T^N, X) X, Z} &
\mbox{Case } (c): & \chevron{R(T^N, X) T^N, Z}
\end{align*}
where $Z \perp X, T^N$.

Case (a) is just $-\Sec(T^N,X)$. Either $X \in T_p S$ or $X \in T_p S^\perp$.
If $X \in T_p S$, then $\sec(T^N, X) = 0$ since this is the curvature of a flat strip.
If $X \in T_p S^\perp$, by \eqref{eqn:swap}, $\sec(T^N, X) = \sec(T^S, X)$ which is again the curvature of a flat strip.

For case (b), we similarly first consider $X \in T_p S$.
Then $R(T^N, X) X$ is a vector in the span of $T^N$ and $X$ since the flat strips are totally geodesic, and hence the innder product with $Z$ is zero.
For the other case $X \in T_p S^\perp$, we apply \eqref{eqn:swap} again and see that
\begin{equation} \chevron{R(T^N, X^N) X^N, Z} = -\chevron{R(T^S, X^N) X^N, Z} = 0 \end{equation}
for the same reason.

Case (c) follows as in (b).
\end{proof}

This shows that there is an orthonormal basis $B = \curly{e_1, e_2, e_3, T}$ of $T_pM$, for each $p \in S$, with $T$ in $\ker R$ and $e_i$ in $\ker R^\perp$ and so that each $e_i$ and $T$ is in either $T_p S$ or in $T_p S^\perp$.
Hence we have the cases that either zero, one, two, or three of $e_1, e_2, e_3$ are in $T_p S$, and whichever of these holds at one point on $S$ must hold for all points of $S$.

\begin{proof}[Proof of Theorem~\ref{thm:pos_curv_CN3}]

First consider the cases where either none or exactly one of the $e_i$ lie in $T_p S$, which then implies that $S$ is flat.
By Lemma~\ref{lemma:flat_soul}, either $\widetilde M$ splits with a Euclidean factor, or $S$ is a point.

Next, consider the case where all three of the $e_i$ lie in $T_p S$.
If $T$ lies in $T_p S$ as well, $S$ is four dimensional, so $S = M$.
Then $M$ is compact and so $\widetilde M$ splits by Theorem~\ref{thm:fin_volume_CN3}.
If instead, $T$ lies in $T_p S^\perp$, then $S$ is a codimension 1 soul and so $M$ splits isometrically as $S \times \R$ \cite{soul_thm}.

Finally, consider the case where $e_1, e_2 \in T_p S$ but $e_3 \in T_p S^\perp$.
If $T \in T_p S$, then $S$ is codimension 1 and again $M$ splits isometrically as $M = S \times \R$.
So assume that $T \in T_p S^\perp$.
For $i, j \in \curly{1,2}$, observe that, since $S$ is totally geodesic,
\begin{equation} \chevron{ \nabla_{e_i} e_3, e_j} = - \chevron{e_3, \nabla_{e_i} e_j} = 0, \end{equation}
and also
\begin{equation} \chevron{ \nabla_{e_i} e_3, e_3} = \frac 1 2 e_i(\chevron{e_3, e_3}) = 0. \end{equation}
Since $e_i, T$ span a flat totally geodesic strip,
\begin{equation} \chevron{ \nabla_{e_i} e_3, T} = - \chevron{e_3, \nabla_{e_i} T} = 0 \end{equation}
and so we get $\nabla_{e_i} e_3 = 0$.
Similarly, $\nabla_T e_3 = 0$.
These show that $e_3$ and $T$ are parallel vector fields normal to $S$, though they may be defined only locally.
Suppose that $M$ is simply connected.
Then $e_3$ and $T$ are globally-defined parallel normal vector fields on $S$.
And hence $M$ is isometric to the space of all souls and hence splits isometrically as $M = S \times \R^2$ \cite{yim, marenich,strake}.
This completes the proof for the case that $M$ is simply connected.

For this last case with $M$ not simply connected, we then know that the universal cover $\widetilde M$ either splits isometrically or $\widetilde M$ is diffeomorphic to $\R^4$.
In the first case, we are done, so we assume that $\widetilde M \approx \R^4$.
In the current case, $M$ itself has $e_1, e_2 \in T_p S$ and $e_3, T \in T_p S^\perp$.
So, $M$ has a 2 dimensional soul $S$.
Either $S$ is flat or there is at least one point on $S$ where $\Sec(e_1, e_2) > 0$.
In the first case, Lemma~\ref{lemma:flat_soul} shows that $\widetilde M$  must split.

So suppose that $S$ has a point where $\Sec(e_1, e_2) > 0$.
Then by Gauss-Bonnet, $\widetilde S$ must be a sphere.
Since $M$ is diffeomorphic to the normal bundle $\nu(S)$, then $\widetilde M$ is diffeomorphic to the universal cover of $\nu(S)$, which is the pullback bundle $\pi^*(\nu(S))$ by $\pi: \widetilde M \rightarrow M$.
This pullback bundle is a vector bundle over $\widetilde S$, a sphere.
This contradicts the fact that $\widetilde M$ is diffeomorphic to $\R^4$.

Hence the only case when $\widetilde M$ does not split when the soul of $M$ is a point.
\end{proof}

A similar splitting result to Theorem~\ref{thm:pos_curv_CN3} was proved for arbitrary odd conullity under the assumption that the sectional curvature of all planes orthogonal to $\ker R$ are non-zero in \cite{rosenthal, rosenthal_k_nullity}.

\section{Examples}
\label{section:example}

A class of 3 dimensional examples of conullity at most 2, originating in \cite{sekigawa}, are metrics of the form
\begin{equation}
\label{eqn:sekigawa}
g = p(x,u)^2 dx^2 + (du - v \; dx)^2 + (dv + u \; dx)^2.
\end{equation}
Such manifolds have conullity exactly 2 and $\Scal = \frac {-1}{p} \frac{\partial^2 p}{\partial u^2}$.
However $\Scal > 0$, and hence $\sec \geq 0$, cannot hold for a complete manifold of this type.
Indeed, the integral curves of $\pder{}{u}$ are geodesics along which $p$ would vanish in finite time.

We now provide a modification to this which gives examples with conullity 3.
Let $M^4$ be $\R^4$ with coordinates $x,u,v,w$ and define the metric on $M$ by
\begin{equation}
g = (p(x,u,w) dx)^2 + (du - (v+w) dx)^2 + (dv + (u+w) dx)^2 + (dw - (v-u) dx)^2
\end{equation}
with $p > 0$.
Then $(M,g)$ has conullity at most 3.
To see this, define
\begin{align*}
T &:= \pder{}{v}, \\
e_1 &:= \tfrac 1 {\sqrt 2} \paren{ \pder{}{u} + \pder{}{w} }, \\
e_2 &:= \frac{1}{p(x,u,w)} \paren{\pder{}{x} + (v+w) \pder{}{u} - (u+w) \pder{}{v} + (v-u) \pder {}{x}}, \\
e_3 &:= \tfrac{1}{\sqrt 2} \paren{ \pder{}{u} - \pder{}{w} }.
\end{align*}
This gives an orthonormal basis with $T$ the nullity direction and the splitting tensor $C_T$ acting on $\{e_1,e_2, e_3\}$ is 
\begin{equation}C = 
\begin{pmatrix}
0 & \frac{\sqrt 2}{p} & 0 \\
0 & 0 & 0 \\
0 & 0 & 0
\end{pmatrix}.
\end{equation}

The $\pder{}{u}$ and $\pder{}{v}$ vector fields integrate to geodesics, as do the $e_1, e_3$ vector fields.
The hyperplanes given by $\mbox{span}\curly{T,e_1,e_3} = \mbox{span}\curly{\pder{}{u}, \pder{}{v}, \pder{}{w}}$ integrate to flat, totally geodesic submanifolds.
The scalar curvature is
\begin{equation}
\Scal = \frac {-2}{p} \paren{ \frac{\partial^2 p}{\partial w^2} + \frac{\partial^2 p}{\partial u^2} }.
\end{equation}
Moreover,  $R = 0$ if and only if $\Scal = 0$, so $\Scal > 0$ everywhere implies that the conullity is 3.

Note that this family of examples does not include a complete manifold with $\sec \geq 0$ for $g$ defined on any subset of $\R^4$.
To see this, observe that fixing any $x$ gives a totally geodesic submanifold where the induced metric is $du^2 + dv^2 + dw^2$, a flat plane.
Hence the lines in the $(u,v,w)$ planes with $x$ fixed are geodesics in $M$ and for $M$ to be complete, $g$ must be non-singular along any of these.
However, considering $p$ as a function just of $u, v, w$ on this plane, $\Delta p < 0$ everywhere, so $p$ must have a zero for some finite point and hence $g$ is singular along one of the geodesics in $M$.
Furthemore, these examples are locally irreducible since the splitting tensor $C$ does not vanish.

Finally, the result in \cite{rosenthal} gives a splitting theorem for manifolds with odd conullity under the curvature assumption that all planes orthogonal to $\ker R$ have non-zero sectional curvature.
They also prove the result for the case where $R$ is a positive or negative definite bilinear form when restricted to the space of bivectors orthogonal to $\ker R$.
We note that in our family of examples, the plane spanned by $\{e_1, e_3\}$ has $\sec = 0$ and so $(M,g)$ does not satisfy either of these curvature assumptions at any point.

\printbibliography

\end{document}